\title{Maker-Breaker on Galton-Watson trees}
\author{Timo Vilkas
		\\\normalsize Lunds Universitet
}
\theoremstyle{break}
\newtheorem{theorem}{Theorem}[section]
\newtheorem{corollary}{Corollary}[section]
\newtheorem{lemma}{Lemma}[section]
\newtheorem{proposition}{Proposition}[section]
\newtheorem{definition}{Definition}
\newtheorem{example}{Example}
\let\c@proposition\c@theorem
\let\c@lemma\c@theorem
\let\c@corollary\c@theorem
\newenvironment{proof}{\noindent{\sc Proof:}}{\vspace{-1em}~\hfill $\square$\vspace{2em}}
\newcommand\IN{\mathbb{N}}
\newcommand\IZ{\mathbb{Z}}
\newcommand\IE{\mathbb{E}\,}
\newcommand\Prob{\mathbb{P}}
\renewcommand\epsilon{\varepsilon}
\renewcommand\phi{\varphi}
\definecolor{darkblue}{rgb}{0,0,.5}
\begin{document}
\newpage
\maketitle
\begin{abstract}
We consider the following combinatorial two-player game: On the random tree arising from a branching process, each round one player (Breaker) deletes an edge and by that removes the descendant and all its progeny, while the other (Maker) fixates an edge to permanently secure it from deletion. Breaker has won once the tree's root is contained in a finite component, otherwise Maker wins by building an infinite path starting at the root. It will be analyzed both as a positional game (the tree is known to both players at the start) and with more restrictive levels of information (the players essentially explore the tree during the game).
Reading the number of available edges for play as a random walk on $\mathbb{Z}$ allows us to derive the
winning probability of Breaker via fixed point equations in three natural information regimes.
These results provide new insights into combinatorial game theory and random structures, with potential applications to network theory, algorithmic game design and probability theory.
\end{abstract}

\noindent
\textbf{Keywords:} 
Maker-Breaker game,  Galton-Watson tree, branching process, percolation games, phase transition, random walk on $\IZ$ with drift.\\[0.5em]
\textbf{MSC2020:} 05C57, 91A43, 60G50

\section{Introduction}
Maker-Breaker games are a class of combinatorial games in which two players, called Maker and Breaker, compete by selecting elements from a finite or infinite structure, with opposing objectives. These games are well-studied in the context of graph theory, where Maker seeks to build a particular substructure while Breaker tries to prevent its formation. In the two most common variants of the game, either nodes or edges are played, a distinction which (depending on the objective) can become irrelevant in case the underlying graph is a tree.
Historically seen, a game of this kind was reportedly first formulated and formalized by C.E.\ Shannon at mid-20th century and later coined as ``Shannon switching game'' \cite{Shannon}.
In recent years, the study of Maker-Breaker games has been extended to boards given by random structures.

Galton-Watson trees, initially introduced in the 19th century to study the extinction of family names, have since found wide applications in areas like probability theory, evolutionary biology, and network modeling. The intersection of these two fields, Maker-Breaker games on Galton-Watson trees, presents interesting challenges that combine random processes with adversarial strategies in combinatorial games.

In this article, we explore the dynamics of Maker-Breaker games on Galton-Watson trees, examining how the probabilistic nature of the tree structure and the available information influence the winning probabilities of both players, when the objective for Breaker is to cut off the root, in the sense that it is eventually contained in a finite component. We delve into key questions such as how the offspring distribution of the tree affects the likelihood of Breaker's success and how the available information about the tree influences their tactics to account for the randomness of the game environment. Through this analysis, we shed light on the intricate interplay between randomness and strategy in these games, offering insights that could have broader implications for fields ranging from algorithm design to network theory. 

\subsection{Main results} 
Throughout the paper, we will write $g$ for the probability generating function of the offspring distribution (cf.\ equation \eqref{pgf} below) and $q$ for the corresponding extinction probability (cf.\ Theorem \ref{extinction}). The probability of Breaker succeeding with isolating the root in a finite component, when being in the favorable position to start the game, is denoted by $p$ (or else $\bar{p}$, if Maker starts instead).

{\em Positional games} in general are usually characterized by a finite board and perfect information, which make the games deterministic, except for (potential) randomness of the game board. In contrast, we consider here a (potentially) infinite random tree as underlying graph, which is explored by the players during the game.
Both strategies and outcome under optimal play depend on the information available to the players.
Three different regimes will be analyzed in detail: First, the usual setting in which the whole tree is revealed at the start (turning the game into a deterministic matter), then the other extreme when no information is available (besides the edges incident to either the root or already played edges) and finally an intermediate case in which each vertex connected to the root (by edges Maker picked) is marked according to whether or not it is the root of an infinite subtree.

In all three cases, the winning probability of Breaker $p$ is the solution to an equation involving the probability generating function $g$ and its derivative:

\newtheorem*{Thm4.3}{Theorem \ref{formula}}
\begin{Thm4.3}
	If the GW-tree is revealed to both players at the start, it holds
	\begin{equation*}
		p=g(p)+(1-p)\cdot g'(p).
	\end{equation*}
\end{Thm4.3}

\newtheorem*{Thm5.3}{Theorem \ref{i=0}}
\begin{Thm5.3}
Given there are no childless nodes and the players do not receive any further information, it holds
	\begin{equation*}
		p^2=g(p).
	\end{equation*}
\end{Thm5.3}

If the probability of not having any offspring is strictly positive in this regime, we are still able to determine $p$ in case the offspring distribution is separable (cf.\ Subsection \ref{separable}, in particular Theorem \ref{septhm}).

\newtheorem*{Thm6.4}{Theorem \ref{tv=i}}
\begin{Thm6.4}
	If the information carried by visible nodes is whether or not they have infinite progeny, it holds
	\begin{equation*}
		p^2\,(1-q)=g\big(p\,(1-q)+q\big)-q.
	\end{equation*}
\end{Thm6.4}

For all three regimes we try to illustrate the results with help of examples (in terms of the offspring distribution: Binomial, geometric and Poisson among others) and corresponding numerical calculations. It appears that the phase transition from an almost sure Breaker win to a non-trivial game is discontinuous only in the first regime (where the tree is revealed right away). Furthermore, in all cases it holds $\bar{p}=g(p)$, with the single exception when no additional information is given and the tree contains leaves (see Theorem \ref{arch} and Proposition \ref{ineq}).
In the last section, some general observations, a few open problems and suggestions for further research are presented.

\subsection{Related work}
As already mentioned, games of this kind were introduced into scientific literature more than half a century ago and an abundance of variants, differing either in the specific rules of the game, the players' objectives or the underlying graphs, have been looked at and analyzed since. One of the first results \cite{Shannon} deals with the objective for Maker (called ``short'' there) to connect two given nodes on a general graph, while Breaker (called ``cut'') tries to prevent that. Other common objectives coined variants such as the ``connectivity game'', ``perfect matching game'', ``Hamiltonian game'' or ``clique game'', in which Maker tries to claim a spanning tree, a perfect matching, a Hamiltonian cycle or a clique of a given size respectively. Besides the rule that edges are picked one by one alternatingly, the more general (potentially biased) $(m,b)$-rule, according to which in each round first Maker plays $m$ edges at once, then Breaker plays $b$ edges, has been considered and analyzed in many different contexts. We will, however, stick to the basic $(1,1)$-rule here.

Chvátal and Erd\H{o}s in their seminal paper \cite{erdös} considered among others the biased $(1,b)$ connectivity game on the complete graph $K_n$ and found that the threshold for the bias is around $b=\frac{n}{\log n}$ as $n$ grows large (in the sense that if each round first Breaker picks $b$ edges, then Maker picks one edge, it is a Maker's win for $b$ smaller and a Breaker's win for $b$ larger than this threshold).
More than 20 years later, Bednarska and \L uczak \cite{Luczak} analyzed the size of the largest component Maker is able to build in this context. Hefetz et al.\ \cite{Hefetz} later addressed the time until Maker wins the unbiased $(1,1)$-game in this setting, with different objectives (among others, perfect matching and Hamiltonian game).

In the context of random boards, the Erd\H{o}s-Rényi graph $G_{n,p}$ was unsurprisingly one of the first targets addressed. Stojakovi\'c and Szabó \cite{StoSza} established for different objectives (connectivity, perfect matching, Hamiltonian and clique game) the threshold for the edge probability $p$, at which Maker wins the unbiased $(1,1)$-game with high probability as $n\to\infty$. In addition to that, they investigate the critical bias $b$ (asymptotically and depending on $p$) at which Maker wins the $(1,b)$-game. Ferber et al.\ \cite{Rndboards} later extended this line of research to further ranges of the edge probability $p$. London and Pluhár \cite{London} addressed these questions for a slightly altered Maker-Breaker game, in which Maker can only play edges incident to edges she already claimed. Clemens et al.\ \cite{CKM} extended their investigation of the threshold probability $p$ to the unbiased $(2,2)$-variant. While their results reveal interesting deviations from the answers for the original game, it is not hard to see that this additional rule is in fact no restriction in our chosen setting. 

Maker-Breaker games have also been considered on random geometric graphs $G(n,r)$: $n$ points are placed independently and uniformly at random on the unit square and connected by an edge if their distance does not exceed $r$. Here, the crucial characteristic for whether it is a Maker's or a Breaker's win (with high probability as $n\to\infty$) turned out to be the minimum degree. That it is a local property of the random graph should not come as a surprise, since the objectives considered by Beveridge et al.\ \cite{RndGeo} (connectivity, perfect matching and Hamilton game) do involve {\em all} vertices, in the sense that Maker attempts to build a structure including all nodes of the graph.

In contrast to that, global graph properties were found to be crucial in other (in particular tree) contexts, such as the {\em branching number} governing both random walks and percolation on trees (cf.\ Lyons \cite{Lyons}) or the average number of offspring, determining whether the survival probability $1-q$ of a Galton-Watson tree is strictly positive (cf.\ Theorem \ref{extinction}).
When determining the probability for a Galton-Watson tree to contain an infinite complete binary tree $T_2$, the offspring distribution's probability generating function occupies center stage (cf.\ Dekking \cite{Dek1}). We will revisit this result in our context as Theorem \ref{formula}.

Also in the analysis of what Holroyd et al.\ (already in an earlier publication) named {\em percolation games} on rooted, directed Galton-Watson trees \cite{HM} (a token is moved along the directed edges and the game ends when a sink is reached) the winning (and draw) probabilities undergo a phase transition at a fixed point of an equation involving the offspring distribution's generating function. Here, three game variants are considered: normal (the player unable to move loses), misère (the player reaching a sink wins) and escape (Stopper wins when the game ends, Escaper wins if it doesn't). A similar but qualitatively different game on undirected rooted Galton-Watson trees (where nodes also can be traps and targets, but edges traversed in both directions) was recently analyzed by Karmakar et al.\ \cite{rooted} and yielded a similar kind of results.

In terms of game variant and players' objectives, what is most closely related to the setting we consider here (besides the original Shannon switching game) is the work by Day and Falgas-Ravry: Both players in turns claim edges and while Maker's objective is to build a path, Breaker is trying to prevent that. In a first publication \cite{DFR1}, the authors considered the task of crossing a finite rectangular grid from left to right, a special case of Shannon's original game but extended to the general $(m,b)$-rule. In a second publication \cite{DFR2}, they consider an infinite underlying graph and Maker's task is to build an infinite path starting at a given vertex $v_0$ (the exact same variant we will adopt here, with $v_0$ being the root of the tree $\mathbf{0}$). Their main results concern both the $d$-dimensional grid $\mathbb{Z}^d$ and the infinite $d$-regular tree $T_d$. Observe, however, that in these cases the Maker-Breaker game is fully deterministic.

\section{Preliminaries}\label{sec2}
In order to get started, let us properly introduce the two central concepts: the (1,1) Maker-Breaker game and the branching process generating the (random) tree which will be the underlying graph in our analysis.

\subsection{The Maker-Breaker game on the edge set of a graph}
Within graph theory, {\em Maker-Breaker} is a combinatorial two-player game, taking place on a graph $G=(V,E)$, with one node marked as the origin $\mathbf{0}$. All edges in $E$ are available for play until either fixated by Maker or removed by Breaker. The two opposing players in turns are allowed to pick one of the remaining edges in order to either fixate (Maker) or remove it (Breaker). The objective for Maker is to create as big a component containing the origin as possible, while Breaker tries to cut it off.
We will consider simple infinite graphs (i.e.\ no loops or multiple edges, $|V|=\infty$) with finite degrees and consider the game won for Breaker once the origin is contained in a finite component separated from the rest of the graph by removed edges. Maker wins if Breaker doesn't and by a simple compactness argument this results in the task of fixating an infinite path starting at the origin. We assume both Maker and Breaker to play optimally in the sense that they try to maximize their chance of winning in every move. Note that the game is deterministic on a given graph $G$ and only becomes a
game of chance if played on a random graph or by integrating randomness into the players' strategies. Further, the probability of Maker winning the game is monotonous in $G$ in the sense that adding nodes and/or edges to the graph can not decrease it. As default, unless otherwise stated, we consider Breaker to start the game.

As a first example, consider the two-dimensional grid  $\IZ^2$ (see Figure \ref{MB} for an illustration). It takes little ingenuity to come up with a winning strategy for Maker in the $(1,1)$-game, i.e.\ each player gets to choose one edge at a time:
Note first that every node $u\in\IZ^2$ is adjacent to at least two edges pointing outwards, to be more precise $e=\langle u,v\rangle\in E$, where $\lVert u \rVert<\lVert v \rVert$. If now Maker always plays an edge pointing outwards from $u$ after Breaker has done so (for the first time), in fact all vertices (in particular the origin) stay connected to each other. This kind of strategy is commonly referred to as {\em pairing strategy}.

\begin{figure}[H]
	\centering
	\includegraphics[scale=1.3]{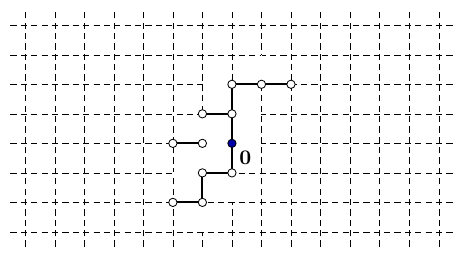}
	\caption{Illustrating example of a game position on $\IZ^2$ after 10 moves each.\label{MB}}
\end{figure}

In order to make it more interesting, one can consider the more general $(m,b)$-variant of the game (Breaker removes $b$ edges, then Maker fixates $m$ edges in turns). Obviously, a winning strategy for Maker will persist when increasing $m$ (as it does for Breaker when $b$ is raised). However, already the outcome of the $(m,b)$-game on $\IZ^2$ for $b=2$ is far from trivial to determine.

\subsection{Bienaymé branching process and Galton-Watson tree}
Here, we want to consider as underlying (random) graph for the Maker-Breaker game the (potentially infinite) family
tree of a branching process introduced by probabilist Irénée-Jules Bienaymé and later (independently) made widely popular by natural scientist Francis Galton and mathematician Henry William Watson.

In this classical branching process, each individual $v$ gives rise to a random number
$\xi_v$ of offspring. The sequence $\{\xi_v\}_{v\in V}$ consists of independent and identically distributed
copies of a random variable $\xi$.
Each individual is represented by a node $v$ and nodes corresponding to parent and child are linked by an edge,
to form what is commonly called {\em Galton-Watson family tree}. Depending on the offspring distribution (and chance), this tree can turn out to be finite or infinite.

All nodes at a given (graph) distance $n\in\IN_0$ of the root $\mathbf{0}$ in the tree are
considered to form a generation. Let $Z_n$ denote the number of individuals in generation $n$, i.e.
\[Z_0=1,\ Z_{n+1}=\sum_{j=1}^{Z_n}\xi_{v_{nj}}\quad\text{for}\ n\in\IN_0,\]
where the nodes are indexed in breadth-first-order so that $\mathbf{0}=v_{01}$ and in general $v_{nj}$ corresponds to individual $j$ in generation $n$. We will denote the probability mass function (pmf) of the offspring distribution by $p_k=\Prob(\xi=k)$, for $k\in\IN_0$, and write 
\begin{equation}\label{pgf}
g(x)=\IE\big(x^\xi\big)=\sum_{k=0}^\infty p_k\, x^k
\end{equation}
for its {\em probability generating function} (pgf). Throughout, we will consider the offspring distribution to be almost surely finite. At this point it might be worth mentioning that $\mu:=\IE\xi=g'(1)$ is commonly referred to as \emph{Malthusian parameter}.
The extinction probability of the corresponding Galton-Watson process, i.e.\ $q:=\lim_{n}\Prob(Z_n=0)$ can be characterized as the first intersection of $g$ with the identity function in the first quadrant (cf.\ Theorem 1 in \cite{Athreya-Ney}
for instance):

\begin{theorem}[Extinction probability]\label{extinction}
	The extinction probability $q$ of the branching process $(Z_n)_{n\in\IN_0}$ is the smallest nonnegative root
	of the equation $g(x)=x$. It equals 1, if $\mu\leq 1$ (and $p_1<1$) and is strictly less than 1 if
	$\mu> 1$ (or $p_1=1$).
\end{theorem}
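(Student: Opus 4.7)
The plan is to leverage the branching structure of the process together with elementary analytic properties of the pgf $g$. First, I would note that the extinction events $\{Z_n = 0\}$ are nested increasingly in $n$ (once a generation is empty, all subsequent ones are empty too), so $q = \lim_n \Prob(Z_n = 0)$ is a well-defined monotone limit in $[0,1]$. Next, I would establish the fundamental functional identity $g_n = g \circ g_{n-1}$, where $g_n$ denotes the pgf of $Z_n$; this follows by conditioning on the first generation and using that the subtrees rooted at the children of $\mathbf{0}$ are i.i.d.\ copies of the whole process. Evaluating at zero gives $\Prob(Z_n = 0) = g\bigl(\Prob(Z_{n-1} = 0)\bigr)$, and passing to the limit via continuity of $g$ yields $q = g(q)$, so that $q$ is indeed a nonnegative root of $g(x)=x$.

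To show minimality, I would take any nonnegative root $s$ of $g(x) = x$ and argue inductively via monotonicity of $g$ on $[0,1]$ that $\Prob(Z_n = 0) \leq s$ for every $n$: the base case $\Prob(Z_0 = 0) = 0 \leq s$ is trivial, and the step $g_n(0) = g(g_{n-1}(0)) \leq g(s) = s$ follows since $g$ preserves order. Letting $n \to \infty$ gives $q \leq s$.

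For the dichotomy in the second assertion, I would exploit the convexity of $g$ on $[0,1]$ (immediate from nonnegativity of the coefficients $p_k$) together with the boundary values $g(0) = p_0$, $g(1) = 1$, and the tangent slope $g'(1) = \mu$. When $p_1 < 1$ and $p_0 + p_1 < 1$, the function $g$ is strictly convex, so the chord argument on $[x,1]$ combined with $\mu \leq 1$ forces $g(x) > x$ for every $x \in [0,1)$, leaving $1$ as the only fixed point. When $\mu > 1$, strict convexity gives $g(x) < x$ for $x$ slightly below $1$, and since $g(0) = p_0 \geq 0$, the intermediate value theorem produces a fixed point in $[0,1)$. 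The degenerate case $p_1 = 1$ collapses $g$ to the identity, for which $q = 0$ trivially (each generation has exactly one individual), giving the stated exception.

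The main obstacle is the convexity analysis in the critical regime $\mu = 1$: one has to rule out fixed points in $[0,1)$ without any slack in the derivative, which is exactly where the strict convexity of $g$ (and hence the extra hypothesis $p_1 < 1$) is indispensable. A secondary subtlety is the boundary case $p_0 = 0$, for which $0$ is itself a root of $g(x) = x$ and $q = 0$ trivially; one must verify that the statement remains consistent in this situation, which it does since $g'(1) = \mu > 1$ or $p_1 = 1$ must hold whenever $p_0 = 0$.
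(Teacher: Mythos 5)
Your proposal is correct and is precisely the standard argument that the paper itself does not spell out but only sketches (citing \cite{Athreya-Ney}): iterate the pgf via $\Prob(Z_n=0)=g\bigl(\Prob(Z_{n-1}=0)\bigr)$, pass to the limit to get $q=g(q)$, use monotonicity of $g$ for minimality, and convexity of $g$ for the dichotomy at $\mu=1$. The only omission is cosmetic: in the subcase $\mu\leq 1$, $p_1<1$ with $p_0+p_1=1$, the function $g$ is affine rather than strictly convex, but there $g(x)-x=p_0(1-x)>0$ on $[0,1)$ directly, so your conclusion stands.
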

Excluding the degenerate case $p_1=1$, a Galton-Watson process with $\mu\leq 1$ is hence called {\em subcritical}, one with $\mu> 1$ instead {\em supercritical}. 
The standard proof of this classical theorem is straight-forward and based on the fact that for fixed $x\in[0,1]$, the $n$-fold iterates $g\circ g\circ\dots\circ g(x)$ converge to $q$ as $n$ goes to infinity, see Figure \ref{G-W} for an illustration.

\begin{figure}[H]
	\centering
	\includegraphics[scale=1]{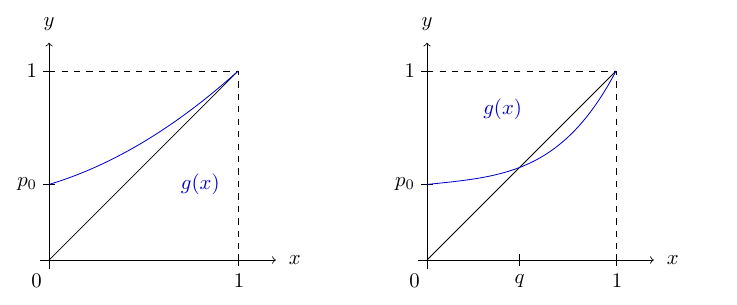}
	\caption{Examples of probability generating functions corresponding to a subcritical (left) and supercritical process (right).\label{G-W}}
\end{figure}

For a given offspring distribution, let $T$ denote the (random, unlabelled) Galton-Watson (GW) tree and for an arbitrary node $v$ in $T$, we are going to write $T_v$ for the subtree rooted in $v$. By construction, every $T_v$ is a copy of $T$.

The (1,1)-version of Maker-Breaker on $T$ amounts to Breaker cutting off a subtree $T_v$ (by disconnecting $v\neq\mathbf{0}$ from its parent) and Maker fixating an edge $\langle u,w\rangle$, hence adding the corresponding child node to the component containing the root. Observe that $T$ being a tree (i.e.\ not containing any cycles) makes it suboptimal for both players to play an edge linking two vertices that are not incident to any fixated edges (yet).
We will refer to the nodes connected to the root by fixated edges as {\em internal nodes}. Nodes, corresponding to descendants in $T$ of currently internal nodes, which are neither internal nor cut off from the root, are referred to as {\em external nodes}, see Figure \ref{tree} for an illustration.

\begin{figure}[H]
	\centering
	\includegraphics[scale=0.9]{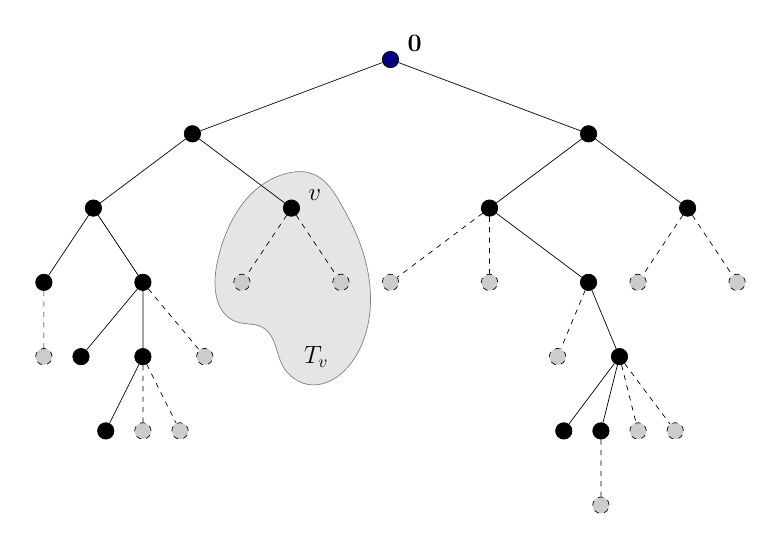}
	\caption{Illustrating example of a game position on $T$ with internal nodes black, external nodes gray and others omitted.\label{tree}}
\end{figure}

\subsection{Random walk on \texorpdfstring{$\IZ$}{\bf Z}}
To complete this introductory section, we want to include a few basic results for random walk on integers. This topic is closely related to GW-trees for the following reason: Imagine we explore the tree node by node and keep a list of nodes logged as offspring of explored nodes, which haven't been explored yet. Then the length of this list can be seen as random walk on $\IZ$ with step size distribution $\xi-1$, started at 1. The tree is infinite if and only if this walk stays strictly positive.

Some conditions chosen for the increments in this section might seem a bit arbitrary for now, but will appear natural later on, once we express the number of edges which the two players consider for play as a random walk on $\IZ$ (cf.\ \eqref{rw} below). Let us begin with a short proof of the well-known fact that random walk with upwards drift has a non-zero probability to never become negative if started at $0$.

\begin{lemma}\label{drift}
	Let $(X_n)_{n\in\IN}$ be an i.i.d.\ sequence with $\IE(X_1)>0$.
	Then the infimum of the random walk $(S_n)_{n\in\IN_0}$, defined by $S_0=0,\ S_{n+1}=S_n+X_{n+1}$, is attained at $S_0=0$ with positive probability.
\end{lemma}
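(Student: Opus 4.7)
The plan is to argue by contradiction, combining two classical probabilistic tools: the strong law of large numbers (SLLN) and the strong Markov property of the walk.

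First I would invoke the SLLN: since $\IE(X_1)>0$, one gets $S_n/n\to\IE(X_1)$ almost surely, hence $S_n\to+\infty$ a.s. In particular $M:=\inf_{n\geq 0}S_n$ is a finite random variable. Because $S_0=0$ forces $M\leq 0$, the claim $\Prob(M=0)>0$ amounts to ruling out that the walk almost surely dips strictly below $0$.

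Suppose for contradiction that $\Prob(M<0)=1$. Then $\tau_1:=\inf\{n\geq 1:S_n<0\}$ is a.s.\ finite, and iteratively setting $\tau_0:=0$ and $\tau_{k+1}:=\inf\{n>\tau_k:S_n<S_{\tau_k}\}$ yields an infinite sequence of a.s.-finite stopping times: by the strong Markov property at $\tau_k$, the shifted walk $(S_{\tau_k+m}-S_{\tau_k})_{m\geq 0}$ is an independent copy of $(S_n)_{n\geq 0}$, so the same contrary hypothesis forces it to drop strictly below its own starting level. The increments $Y_k:=S_{\tau_k}-S_{\tau_{k-1}}$ are therefore i.i.d.\ and a.s.\ strictly negative.

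Finally, applying the (generalised) SLLN to the partial sums $S_{\tau_k}=\sum_{i=1}^k Y_i$---here $\IE(Y_1^+)=0$ makes $\IE(Y_1)$ well-defined in $[-\infty,0)$---yields $S_{\tau_k}/k\to\IE(Y_1)<0$, hence $S_{\tau_k}\to-\infty$. This contradicts $S_n\to+\infty$ from the first step, so the assumption must fail and $\Prob(M=0)>0$. The main (mild) obstacle is the bookkeeping to ensure that the record-low times $\tau_k$ are genuine stopping times for the strong Markov property, and that $S_{\tau_k}\to-\infty$ holds even when $\IE(Y_1)$ might equal $-\infty$; this is precisely what the one-sided integrability $Y_1^+\equiv 0$ guarantees via the generalised form of the SLLN.
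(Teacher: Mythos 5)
Your proof is correct, but it takes a genuinely different route from the paper's. The paper argues directly, not by contradiction: by the law of large numbers there is a \emph{deterministic} $n_0$ such that with positive probability $S_n>0$ for all $n\geq n_0$; on this event the global infimum of the walk is attained at one of the finitely many times $0,1,\dots,n_0-1$, so by pigeonhole some fixed $n_1<n_0$ minimizes the walk with positive probability; and since the shifted increment sequence $(X_n)_{n>n_1}$ has the same law as $(X_n)_{n\in\IN}$ (a shift by a deterministic time, so no strong Markov property is needed), the event $\{S_n\geq 0 \text{ for all } n\}$ itself has positive probability. This uses nothing beyond the LLN, pigeonhole, and stationarity of the increments. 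Your argument instead runs by contradiction through the descending ladder epochs $\tau_k$, and therefore requires the strong Markov property at genuine stopping times (the $\tau_k$ are indeed stopping times, and the i.i.d.\ property of the ladder steps $Y_k$ follows by iterating that property, as you indicate), together with a generalised SLLN for the one-sidedly integrable $Y_k$ -- standard but heavier machinery than the paper invokes. What your route buys is the classical fluctuation-theory picture: under the contradiction hypothesis every descending ladder epoch is a.s.\ finite, which positive drift forbids; this is the textbook argument and yields extra structure (e.g.\ it shows the number of strict descending ladder epochs is geometric, so the all-time minimum is attained). One simplification worth noting: the generalised SLLN is more than you need in the final step, since for i.i.d.\ strictly negative $Y_k$ there is $\epsilon>0$ with $\Prob(Y_1<-\epsilon)>0$, and the second Borel--Cantelli lemma gives $Y_k<-\epsilon$ infinitely often while all terms are negative, whence $S_{\tau_k}\to-\infty$ directly.
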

\begin{proof}
	By the law of large numbers it holds almost surely that \[\lim_{n\to\infty}\frac{S_n}{n} =\lim_{n\to\infty}\frac1n \sum_{k=1}^n X_k=\mathbb{E}(X_1)>0.\] Thus, there exists $n_0\in\mathbb{N}$ such that $\mathbb{P}\big(S_n >0\ \text{ for all } n\geq n_0\big)>0$.
	Since $n_0$ is finite, there must be some $n_1\in[0,n_0)$ such that $S_{n_1}$ is the minimum value of the walk with strictly positive probability. As a consequence, the event $\{\sum_{k=n_1+1}^n X_k\geq 0$ for all $n>n_1\}$ has non-zero probability. Due to the fact that $(X_n)_{n>n_1}$ and $(X_n)_{n\in\mathbb{N}}$ have the same distribution, the claim follows.
\end{proof}

Random walk on the integers with increments bounded from below by $-1$, i.e.\ $\Prob(X_n\geq -1)=1$, are commonly referred to as {\em left-continuous} or {\em skip-free to the left}. To avoid confusion with the offspring distribution $\xi$, let us write $(\pi_k)_{k\geq -1}$ for the probability mass function and
\[\gamma(x)=\sum_{k=-1}^\infty \pi_k\,x^k=\frac{\pi_{-1}}{x}+\pi_0+\pi_1\,x+\pi_2\,x^2+\dots\]
for the probability generating function of the corresponding increment distribution. Note that $\gamma$ is not defined in $x=0$ if $\pi_{-1}>0$ and that the sum defining $\gamma(x)$ is guaranteed to converge (absolutely) for $0<|x|\leq 1$.

In the remainder of this section, we want to introduce the probabilities of different hitting times to be finite (and odd) for a left-continuous, non-monotonic random walk on $\IZ$ with positive drift starting at $0$, i.e.\ $(S_n)_{n\in\IN_0}$ defined by $S_0=0,\ S_n=S_{n-1}+X_{n}$, where $(X_n)_{n\in\IN}$ is an i.i.d.\ sequence, $\IE(X_1)>0$, $\Prob(X_1\geq-1)=1>\Prob(X_1\geq 0)$. To this end, let
\begin{align*}\tau_{-1}&=\inf\{n,\;S_n=-1\}\quad\text{and}\\
				\tau_0^+&=\inf\{n>0,\;S_n=0\}
\end{align*}
denote the hitting time of state $-1$ and the return time to starting state $0$ respectively. Further we define the
following (conditional) probabilities:
 \begin{equation}\label{probs}
 \begin{gathered}
 	\rho=\Prob(\tau_{-1}<\infty),\quad \sigma=\Prob(S_n>0\text{ for all }n\in\IN)\quad\text{and}\quad\theta=\Prob(\tau_0^+<\tau_{-1})\\
	\text{as well as}\\
	\rho_\mathrm{odd}=\Prob(\tau_{-1}\text{ is odd}\,|\,\tau_{-1}<\infty)\quad\text{and}\quad \theta_\mathrm{odd}=\Prob(\tau_0^+\text{ is odd}\,|\,\tau_0^+<\tau_{-1}).
 \end{gathered}
\end{equation}

To put it in words, $\rho$ is the probability that state $-1$ will eventually be visited and $\rho\cdot\rho_\mathrm{odd}$ the probability that this will happen for the first time after an odd number of steps. $\sigma$ denotes the probability that the walk never returns to starting state $0$ and $\theta$ is the probability that the walk first returns to $0$ before possibly visiting state $-1$. Finally, $\theta\cdot\theta_\mathrm{odd}$ is the probability that the walk performs a round trip started at $0$ without visiting $-1$ in an odd number of steps.

For left-continuous walks, the strong Markov property immediately implies that $\rho$ equals the smallest solution in $[0,1]$ to $\gamma(x)=1$, cf.\ Lemma 2 in \cite{skipfree}. As we will consider increments of the form $\xi-2$, given $p_0=0<p_1$ and $\IE\xi>2$, we have a non-monotonic, left-continuous walk with positive drift and the corresponding probability is the unique solution in $(0,1)$ to $g(x)=x^2$, see Theorem \ref{i=0} below.
 
In case the increments $\xi-2$ can be split in two (see Subsection \ref{separable}), the parity of the hitting times will play an important role, which is why we include the results of Lemma 3.2 and Thm.\ 3.3 in \cite{LCRW}, collected in a single lemma here:

\begin{lemma}\label{calc}
	Let $(S_n)_{n\in\IN_0}$ be a left-continuous random walk with positive drift and $\pi_{-1}>0$, started in $S_0=0$. 
	Then the following hold:
	\begin{enumerate}[(a)]
		\item $\pi_{-1}+\theta+\sigma=1$
		\item $\sigma=\pi_{-1}\cdot \frac{1-\rho}{\rho}$ and $\theta=1-\frac{\pi_{-1}}{\rho}$
		\item $\gamma\big(\rho\,(1-2\rho_\mathrm{odd})\big)=-1$
		\item $\theta_\mathrm{odd}=\frac{\pi_{-1}\,(1-\rho_\mathrm{odd})}{\rho\theta\,(2\rho_\mathrm{odd}-1)}$
	\end{enumerate}
\end{lemma}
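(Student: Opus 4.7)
Parts (a) and (b) rely on a simple first-step analysis enabled by left-continuity. For (a), observe that a walk started at $0$ can arrive at $-1$ only by descending directly from $0$; hence $\{\tau_{-1}<\tau_0^+\}$ coincides with $\{S_1=-1\}$ and has probability $\pi_{-1}$. Since $\{\tau_0^+<\tau_{-1}\}$ has probability $\theta$ by definition and the remaining event $\{\tau_{-1}=\tau_0^+=\infty\}$ is, again by left-continuity, nothing but $\{S_n>0\text{ for all }n\geq 1\}$ of probability $\sigma$, the identity $\pi_{-1}+\theta+\sigma=1$ follows by summing these three mutually exclusive probabilities. For (b), first-step analysis combined with the strong Markov property gives that from state $k\geq 0$ the probability of ever hitting $0$ (equivalently, of reaching $0$ before $-1$) equals $\rho^k$, since each unit descent is an independent copy of a descent from $0$ to $-1$. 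Summing over the first step yields
\[\theta=\sum_{k=0}^\infty \pi_k\,\rho^k=\gamma(\rho)-\frac{\pi_{-1}}{\rho}=1-\frac{\pi_{-1}}{\rho},\]
where the last equality uses the characterizing identity $\gamma(\rho)=1$ cited from \cite{skipfree}. The expression for $\sigma$ then follows from (a).

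For (c), I would pass to the generating function $f(z):=\IE\big[z^{\tau_{-1}}\mathbf{1}\{\tau_{-1}<\infty\}\big]$. First-step analysis together with the strong Markov property (each of the $k+1$ unit descents needed to reach $-1$ from state $k$ contributing an independent factor $f(z)$) yields
\[f(z)=z\sum_{k=-1}^\infty \pi_k\,f(z)^{k+1}=z\,f(z)\,\gamma(f(z)),\]
so $\gamma(f(z))=1/z$ wherever $f(z)\neq 0$. Splitting the expectation defining $f(-1)$ according to the parity of $\tau_{-1}$ and invoking the definition of $\rho_\mathrm{odd}$ gives $f(-1)=\rho(1-2\rho_\mathrm{odd})$; substituting then produces $\gamma\big(\rho(1-2\rho_\mathrm{odd})\big)=-1$.

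Part (d) proceeds analogously with $h(z):=\IE\big[z^{\tau_0^+}\mathbf{1}\{\tau_0^+<\tau_{-1}\}\big]$. A first-step decomposition (noting that from state $k\geq 1$ reaching $0$ is automatic before reaching $-1$ by left-continuity and has gen.\ fn.\ $f(z)^k$) gives
\[h(z)=z\sum_{k=0}^\infty \pi_k\,f(z)^k=z\Big(\gamma(f(z))-\tfrac{\pi_{-1}}{f(z)}\Big)=1-\frac{z\,\pi_{-1}}{f(z)},\]
after applying the identity from (c). Setting $z=1$ recovers $\theta=1-\pi_{-1}/\rho$ as a sanity check, while evaluating at $z=-1$ gives
\[\theta(1-2\theta_\mathrm{odd})=1+\frac{\pi_{-1}}{\rho(1-2\rho_\mathrm{odd})},\]
from which the claimed formula for $\theta_\mathrm{odd}$ is obtained by isolating it and substituting $\theta=1-\pi_{-1}/\rho$. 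The main obstacle, though not a deep one, is deriving the two functional equations for $f$ and $h$ cleanly via the strong Markov property; once those are in place, (c) and (d) reduce to routine algebra, while (a) and (b) are essentially classical.
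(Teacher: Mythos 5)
Your proposal is correct, but note that there is nothing in the paper to compare it against: Lemma \ref{calc} is stated without proof, as a collection of Lemma 3.2 and Theorem 3.3 of the companion paper \cite{LCRW}. Your self-contained derivation -- the three-way partition and descent-by-descent strong Markov argument for (a) and (b), and the functional equation $f(z)=z\sum_{k\geq-1}\pi_k f(z)^{k+1}$ for $f(z)=\IE\big[z^{\tau_{-1}}\mathbf{1}\{\tau_{-1}<\infty\}\big]$ evaluated at $z=\pm1$ for (c) and (d) -- is the natural route, and the algebra checks out (including the sanity check $h(1)=\theta$ and the final isolation of $\theta_\mathrm{odd}$). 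One loose end you should close explicitly: before substituting $z=-1$ into $\gamma(f(z))=1/z$ you must rule out $f(-1)=0$, i.e.\ $\rho_\mathrm{odd}=\tfrac12$; otherwise the argument $\rho\,(1-2\rho_\mathrm{odd})$ in (c) would not even lie in the domain of $\gamma$, and your division in (d) would collapse. This follows in one line from the undivided form of your own functional equation, $f(-1)=-\big(\pi_{-1}+\sum_{k\geq0}\pi_k f(-1)^{k+1}\big)$: if $f(-1)=0$ this forces $\pi_{-1}=0$, contradicting the hypothesis $\pi_{-1}>0$. Also, your appeal to $\gamma(\rho)=1$ in (b) is legitimate in context, since the paper quotes exactly this identity from \cite{skipfree} immediately before the lemma; alternatively it drops out of your functional equation at $z=1$, since $\rho\geq\pi_{-1}>0$.
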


\section{Different information regimes}\label{sec3}

During the game, the tree is not necessarily fully known to the players at the start but can be gradually explored. What is visible to both Maker and Breaker -- besides the current set of internal and external nodes -- will vary, hence create different regimes in the subsequent analysis. The piece of information attached to every visible node $v$ will be denoted by $I(v)$. The strategies and winning probabilities will not only depend on the offspring distribution (as indicated above) but also the level of information, i.e.\ $I(v),\ v\in V,$ revealed to the players. Let us denote the probability of Breaker winning the game (given Breaker started it) by
\[p:=\Prob(\text{Breaker succeeds to isolate the root $\mathbf{0}$ in a finite component}).\]
Trivially, the probability of Breaker winning (no matter who starts) is bounded from below by $q$. In fact, writing $\bar{p}=\Prob(\text{Breaker wins}\,|\,\text{Maker starts})$ we can further conclude $p\geq\bar{p}\geq q$ due to the monotonicity in $G$ mentioned earlier. To avoid trivialities, we will therefore consider the game played on family trees corresponding to supercritical GW-processes, in other words offspring distributions with $\IE\xi>1$, hence $q<1$.

Before we dig deeper into the different information regimes, let us state an overarching relationship between $p$ and $\bar{p}$, that will appear in each of the corresponding sections below:

\begin{theorem}\label{arch}
	Irrespectively of $I(v)$, it holds $\bar{p}\geq g(p)$.
	In fact, in the regimes $I(v)=T_v$ and $I(v)=|T_v|$, as well as when $p_0=0$ in $I(v)=\emptyset$ equality holds.
\end{theorem}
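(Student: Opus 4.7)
My plan is to exploit the recursive structure of the Galton-Watson tree by decomposing the game at the root. Conditional on the root $\mathbf{0}$ having $k$ children $v_1,\dots,v_k$ (so that $k$ is distributed as $\xi$), each subtree $T_{v_i}$ is an independent copy of $T$. I will associate a sub-game $\tilde{G}_i$ with $T_{v_i}$ together with the entry edge $\langle\mathbf{0},v_i\rangle$ and observe that Breaker wins the overall game if and only if Breaker wins each sub-game $\tilde{G}_i$, that is either by cutting the entry edge or by isolating $v_i$ from infinity inside $T_{v_i}$.

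For the general inequality $\bar{p}\geq g(p)$, I will exhibit an explicit strategy for Breaker against a Maker who starts: whenever Maker plays in some sub-game $\tilde{G}_i$, Breaker replies in the same sub-game using an optimal Breaker-starting strategy on $T_{v_i}$. The key observation is that Maker's first move in any $\tilde{G}_i$ must be the entry edge $\langle\mathbf{0},v_i\rangle$ (any other first move would fail to extend the Maker component from $\mathbf{0}$), and after this move the residual sub-game on $T_{v_i}$ with Breaker to move is distributionally identical to a fresh Breaker-starting game on $T$. Under Breaker's proposed strategy each sub-game touched by Maker is therefore won by Breaker with probability at least $p$, while the sub-games in which Maker never plays are trivially won. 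Since the subtrees are disjoint and the per-sub-game strategies depend only on moves within that sub-game, the $k$ outcomes are independent, and averaging over $k\sim\xi$ yields
\begin{equation*}
\bar{p}\;\geq\;\IE\!\left[p^{\xi}\right]\;=\;g(p).
\end{equation*}

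For the equality statement I still need the matching upper bound, namely a Maker strategy of value $1-g(p)$. In the full-information regime $I(v)=T_v$ the outcome of each sub-game (as a Breaker-starting game) is a deterministic function of $T_{v_i}$; Maker, knowing the whole tree, commits the entry edge of a subtree on which Maker wins as Breaker-starter whenever such a subtree exists. The events ``$T_{v_i}$ is a Maker-win as Breaker-starter'' being independent Bernoulli with success probability $1-p$, Maker's conditional winning probability is $1-p^k$, hence $\bar{p}=g(p)$. The argument in $I(v)=|T_v|$ runs identically after conditioning on the (i.i.d.) subtree sizes. Finally, for $I(v)=\emptyset$ with $p_0=0$ I will combine the random-walk framework from Section \ref{sec2} with the identity $p^2=g(p)$ of Theorem \ref{i=0}, which immediately gives $\bar{p}=g(p)=p^2$.

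The main obstacle will be to argue rigorously that the ``follow-in-sub-game'' strategy genuinely decouples the sub-games: because the global play alternates strictly between Maker and Breaker while Maker is free to interleave sub-games arbitrarily, one has to verify that within each $\tilde{G}_i$ the induced sequence of moves is still a legitimate alternating Maker-Breaker play on $T_{v_i}$ to which the Breaker-starting optimal strategy applies. A secondary subtle point is the $|T_v|$-regime, where the claimed Maker strategy must be shown to reach $1-g(p)$ from size information only; the idea is that committing to every subtree costs nothing in the walk formulation, so that the product bound $p^k$ survives the partial loss of structural information.
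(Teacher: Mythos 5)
Your root decomposition and Breaker's ``follow-in-sub-game'' strategy reproduce the paper's own proof of the inequality $\bar{p}\geq g(p)$, and your handling of the regime $I(v)=T_v$ (determinism of the sub-game outcomes, i.e.\ Lemma \ref{char}) and of $I(v)=\emptyset$ with $p_0=0$ (quoting $\bar{p}=p^2=g(p)$ from Theorem \ref{i=0}, whose proof is independent of this statement, so there is no circularity) also match the paper. The genuine gap is the equality in the regime $I(v)=|T_v|$. Your claim that the full-information argument ``runs identically after conditioning on the (i.i.d.) subtree sizes'' rests on a false premise: in this regime the outcome of a Breaker-starting sub-game is \emph{not} a deterministic function of anything the players can observe. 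All children $v_i$ with $|T_{v_i}|=\infty$ are indistinguishable to Maker, since conditioned on being infinite their subtrees are i.i.d.; hence the strategy ``commit the entry edge of a subtree on which Maker wins as Breaker-starter'' cannot even be defined from size information, and whether Breaker will win a given sub-game is still random at the moment Maker must choose. Your fallback remark that ``committing to every subtree costs nothing in the walk formulation'' points in the right direction but is not an argument; in particular it never produces the required upper bound $\bar{p}\leq g(p)$.

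What actually closes this case is exchangeability plus the fixed-point equation of Theorem \ref{tv=i}. Since the subtrees at external nodes with infinite progeny are i.i.d., no available edge is preferable to any other for either player; Breaker's follow strategy is therefore optimal, and the game reduces to the embedded walk of Lemma \ref{skewed}, whose increments $\xi'-2$ are bounded below by $-1$ (left-continuity). Writing $\rho$ for the conditional probability, given $|T_{\mathbf{0}}|=\infty$, that Breaker wins when starting (this is the quantity the recursion in the proof of Theorem \ref{tv=i} computes, the solution of \eqref{fp3} in $(0,1)$), left-continuity gives the conditional probability $\rho^2$ when Maker starts, so that $p=q+(1-q)\,\rho$ and $\bar{p}=q+(1-q)\,\rho^{2}$. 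Equation \eqref{fp3} then rearranges to exactly the claimed identity:
\[
g(p)=g\big(q+(1-q)\,\rho\big)=q+(1-q)\,\rho^{2}=\bar{p}.
\]
Note that this mechanism is also where the hypotheses earn their keep: in the regime $I(v)=\emptyset$ the symmetry argument requires $p_0=0$, since otherwise Maker can get stuck at a leaf and the inequality becomes strict (Proposition \ref{ineq}), whereas in the regime $I(v)=|T_v|$ no such hypothesis is needed because both players simply avoid finite subtrees. Your proposal never engages with this random-walk/exchangeability mechanism, which is the actual content of the equality claim in the partial-information regimes.
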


The inequality follows immediately, considering the potential strategy of Breaker not to cut any edge incident to the root but to play the game on $T_v$ as a new game instead (in which Breaker begins), right after Maker fixates the edge $\langle\mathbf{0},v\rangle$. Then the Maker-Breaker game on $T$ (in which Maker starts) actually becomes a simultaneous play of $Z_1$ independent games (on independent copies of $T$, with Breaker starting). Given $Z_1=k$ nodes in the first generation, Breaker will win using this strategy if all $k$ subgames are lost for Maker, which by independence has probability $p^k$. Consequently, we get by conditioning on $Z_1$:
\[\bar{p}\geq \sum_{k=0}^\infty p_k\, p^k=g(p).\]
In the two regimes $I(v)=\emptyset$ and $I(v)=|T_v|$, there is complete symmetry in the edges considered for play. The strategy sketched above is therefore optimal from Breaker's point of view, as long as there are always edges not incident to the root available for play. Therefore, the inequality is actually an equality in the cases $I(v)=|T_v|$ as well as $p_0=0,\ I(v)=\emptyset$.
The same holds for $I(v)=T_v$ in which the game is deterministic and Maker wins iff the root can be connected to a complete binary tree (cf.\ Lemma \ref{char} below). When Maker starts, Breaker wins iff for no $v$ in the first generation, $T_v$ contains a complete binary tree, hence by conditioning on $Z_1$: $\bar{p}=\sum_k p_kp^k$. For further details, see the corresponding results in the sections below.

It might be worth mentioning that $\bar{p}=g(p)$ does not hold for $q>0$ (i.e.\ $p_0>0$, which is not covered by Theorem \ref{i=0}) in the regime $I(v)=\emptyset$, see Proposition \ref{ineq}.
Finally, observe that Theorem \ref{arch} immediately implies $\bar{p}=1$ if $p=1$, since $1\geq\bar{p}\geq g(p)=g(1)=1$ then.

\section{Instantly revealed tree, \texorpdfstring{$I(v)=T_v$}{I(v)=T\_v}}\label{sec:all}

In the extreme case, where the whole tree $T$ is revealed to the players instantly, the game becomes deterministic
(once $T$ is fixed). The possibility of either player winning the (1,1)-game then depends on whether or not
$T$ contains an infinite complete binary tree rooted in $\mathbf{0}$, i.e.\ a family tree starting with progenitor $\mathbf{0}$ in which each individual has exactly two descendants. We will let $T_2$ denote the infinite complete binary tree and, for arbitrary $n\in\IN_0$, write $T_{2,n}$ for the 2-regular tree with $n$ levels, which is the full binary tree on $2^{n+1}-1$ vertices and the induced subgraph obtained when $T_2$ is cut at generation $n$.

\begin{lemma}\label{char}
	In the Maker-Breaker game on a tree $T$ known to the players at the start, Maker wins if and only if $T$ contains a complete binary tree as (induced) subgraph rooted in $\mathbf{0}$.
\end{lemma}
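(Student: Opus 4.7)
The plan is to prove the equivalence by handling the two implications separately. For the \emph{if} direction (the tree contains $T_2$ rooted at $\mathbf{0}$ implies Maker wins), I would exhibit a pairing strategy for Maker on a fixed copy of $T_2$ inside $T$: at each internal node $v$ of $T_2$, pair $v$'s two $T_2$-child-edges as siblings. Maker's strategy is to answer every Breaker move on an edge of $T_2$ by fixating its sibling (if still unplayed), and otherwise (Breaker plays outside $T_2$, or the sibling is already Maker's) to fixate an arbitrary unplayed edge of $T_2$. A short induction on rounds verifies the invariant that at every node of $T_2$ at most one of its child edges ever becomes a Breaker edge; hence $T_2 \setminus E_B$ keeps, at each node, a non-Breaker child-edge and thus admits an infinite path from $\mathbf{0}$ by K\"onig's lemma. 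In particular the root stays in an infinite component of $T \setminus E_B$ at every finite stage, so by the compactness argument already invoked in Section~\ref{sec2} Maker wins.

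For the \emph{only if} direction I would first reduce to a finite-depth statement via K\"onig's lemma: the partial embeddings of $T_{2,n}$ into $T$ rooted at $\mathbf{0}$ form a finitely branching tree under restriction, so $T \supseteq T_2$ rooted at $\mathbf{0}$ if and only if $T \supseteq T_{2,n}$ rooted at $\mathbf{0}$ for every $n \in \IN$. Assuming $T \not\supseteq T_{2,n}$ for some fixed $n$, I would prove by induction on $n$ that Breaker isolates the root in finitely many moves. The base $n = 1$ (root has at most one child) is immediate. For the step, $T \not\supseteq T_{2,n+1}$ translates to: at most one child $v^*$ of $\mathbf{0}$ satisfies $T_{v^*} \supseteq T_{2,n}$ rooted at $v^*$. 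Breaker opens by cutting $\langle \mathbf{0}, v^* \rangle$, after which every remaining child-subtree of $\mathbf{0}$ falls under the inductive hypothesis. Breaker continues by running, in parallel, the IH-winning strategies on these child-subtrees, responding after each Maker move in the subtree in which Maker just played (and handling linking edges $\langle \mathbf{0}, v_i \rangle$ as the Breaker-initiated first moves of the corresponding sub-games).

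The main difficulty lies in this inductive step, namely in rigorously combining the parallel sub-strategies into one coherent Breaker strategy for the full game: Maker is free to spread her moves across different child-subtrees or even to fixate edges disconnected from her current component. The saving observations are that $\mathbf{0}$ has finitely many children (since the offspring distribution is a.s.\ finite), so only finitely many sub-games are ever relevant; each sub-strategy terminates in a bounded number of moves by the inductive hypothesis; and any Maker move fixating an edge disconnected from her component is effectively wasted, as Breaker may reply by cutting an additional linking edge at $\mathbf{0}$, which only speeds up termination. Putting these together, $\mathbf{0}$ ends up isolated in a finite component after finitely many rounds, completing the proof.
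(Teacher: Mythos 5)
Your proof is correct and follows essentially the same route as the paper's: a sibling-pairing strategy on a fixed copy of $T_2$ for sufficiency, and for necessity an induction on the smallest $n$ such that $T_{2,n}$ is not contained in $T$ rooted at $\mathbf{0}$, where Breaker first cuts the edge to the unique potentially dangerous child and then wins on each remaining child-subtree by the inductive hypothesis. The details you spell out (the K\"onig's lemma reduction and the bookkeeping needed to run the sub-strategies in parallel, including discarding Maker's moves in disconnected subtrees) are left implicit in the paper's terser argument but match its intent exactly.
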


\begin{proof}
	The sufficiency of this condition is easy to verify using the following simple pairing strategy: Maker chooses an infinite complete binary subtree $T_2$ of $T$ rooted in $\mathbf{0}$ and always fixates the edge connecting the (only) sibling in $T_2$ of the node Breaker in the same round disconnected from their common parent (arbitrary edges whenever Breaker picks an edge not in $T_2$ or the edge connecting the sibling to its parent is fixated already).
	
	That the condition is also necessary, we prove by induction on $n$, where $T_{2,n}$ is the smallest full binary tree that cannot be found in $T$ rooted in $\mathbf{0}$. If $n=1$, the root $\mathbf{0}$ has at most one child, which Breaker can disconnect in the first move. For $n>1$, at most one individual in the first generation can be the root of a copy of $T_{2,n-1}$ (otherwise there would be a $T_{2,n}$ rooted in $\mathbf{0}$). This direct descendant (if such exists) Breaker disconnects from the root in the first move. Any other $v$ in generation 1, that Maker can fixate is not the root of a $T_{2,n-1}$ and hence Breaker wins in $T_v$ by induction hypothesis. The general assumption that $\mathbf{0}$ has finite degree (a.s.)\ therefore concludes the argument.
\end{proof}

From this result (and its proof) we can immediately conclude the following:

\begin{corollary}
	If Maker starts the Maker-Breaker game on a tree $T$ known to the players at the start, Maker wins if and only if there exists a child $v$ of $\mathbf{0}$ such that $T_v$ contains a complete binary tree as (induced) subgraph rooted in $v$.
\end{corollary}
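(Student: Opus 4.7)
My plan is to deduce the corollary directly from Lemma \ref{char} by analyzing Maker's opening move and using the fact that the subtrees $T_v$ rooted at the children of $\mathbf{0}$ are pairwise edge-disjoint, so each of them can host an independent sub-game whose edges never interact.

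For sufficiency, I would assume some child $v$ of $\mathbf{0}$ has $T_v$ containing a complete binary tree rooted at $v$. Maker plays $\langle\mathbf{0},v\rangle$ as the opening move and then treats the remainder as the Breaker-starts Maker-Breaker game on $T_v$: whenever Breaker picks an edge inside $T_v$, Maker responds using the pairing strategy from the proof of Lemma \ref{char}; whenever Breaker picks an edge outside $T_v$ (which cannot affect the infinite path Maker is building through $v$), Maker replies with an arbitrary move. Since the sub-game confined to $T_v$ is exactly the Breaker-starts game covered by Lemma \ref{char}, Maker wins.

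For necessity, I would assume no child of $\mathbf{0}$ has this property and construct a Breaker strategy running, in parallel, one winning Breaker-strategy from Lemma \ref{char} per subtree $T_v$ that Maker ever ``enters'' by fixating $\langle\mathbf{0},v\rangle$; before such an entry happens, Breaker preemptively cuts $\langle\mathbf{0},w\rangle$ the first time Maker wastes a move inside $T_w$, making that fixation useless. By edge-disjointness, the moves confined to each entered $T_v$ form an honest Breaker-starts game on $T_v$, which Breaker wins by Lemma \ref{char} since $T_v$ contains no complete binary tree rooted at $v$. For Maker to win the overall game, an infinite fixated path from $\mathbf{0}$ would have to start with some $\langle\mathbf{0},v\rangle$ and continue infinitely through $T_v$, and Breaker has blocked precisely this in every entered sub-game.

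The main subtlety I anticipate is verifying that the turn-count inside each entered $T_v$ matches a genuine Breaker-starts game. This works because Maker's entry move $\langle\mathbf{0},v\rangle$ is not an edge of $T_v$, while Breaker's immediate reply is chosen as the first move of the sub-strategy inside $T_v$; thereafter, Maker moves inside $T_v$ are answered by Breaker moves inside $T_v$, and Maker moves outside $T_v$ are answered by Breaker moves outside $T_v$, so the $T_v$ sub-game alternates correctly with Breaker in the first-player role throughout.
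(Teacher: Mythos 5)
Your proof is correct and takes essentially the same approach the paper intends: the corollary is stated there as an immediate consequence of Lemma \ref{char} and its proof, and your argument---Maker's opening move $\langle\mathbf{0},v\rangle$ followed by the pairing strategy on $T_v$ for sufficiency, and for necessity the parallel composition of Breaker's winning Breaker-starts strategies on the finitely many edge-disjoint subtrees, with preemptive cuts of unentered root edges---is exactly that reduction spelled out. The alternation bookkeeping you add is the right justification that each entered subtree hosts a genuine Breaker-starts game; it is the same reasoning used implicitly in the induction step of the lemma's own proof.
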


Using this characterization, we can derive an equation for the winning probabilities of Maker/Breaker for general offspring distribution,
which become particularly tidy if written in terms of $g$, the probability generating function of $\xi$, cf.\ \eqref{pgf}.

\begin{theorem}\label{formula}
	For the (1,1)-game on a GW-tree $T$ arising from offspring distribution $\xi$ and revealed to both players at the start, it holds
	\begin{equation}\label{fp2}
		p=g(p)+(1-p)\cdot g'(p)\quad \text{and}\quad\bar{p}=g(p).
	\end{equation}
\end{theorem}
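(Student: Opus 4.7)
The plan is to reduce the probabilistic statement to a purely structural question via Lemma \ref{char} (and its corollary), and then derive the two identities by conditioning on the first generation of $T$.

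First, by Lemma \ref{char}, Breaker wins the game (when starting) if and only if $T$ contains no complete binary subtree rooted at $\mathbf{0}$, so $p = \Prob(T \text{ contains no copy of } T_2 \text{ rooted at } \mathbf{0})$; in particular, if $v$ is any vertex of $T$, the event $A_v = \{T_v \text{ contains a copy of } T_2 \text{ rooted at } v\}$ has probability $1-p$, and these events are independent across siblings by the branching property. The formula $\bar{p} = g(p)$ is then immediate from the Corollary: Maker (moving first) wins iff at least one child $v$ of the root satisfies $A_v$, so conditioning on $Z_1 = k$ yields $\bar{p} = \sum_{k} p_k\,p^k = g(p)$.

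For the identity $p = g(p) + (1-p)\,g'(p)$, I would observe that $\mathbf{0}$ is the root of a complete binary subtree of $T$ if and only if at least two of its children $v$ individually satisfy $A_v$. Writing $r := 1-p$ and conditioning on $Z_1 = k$, the probability of ``at least two successes among $k$ independent Bernoulli$(r)$ trials'' is $1 - (1-r)^k - k\,r\,(1-r)^{k-1}$, hence
\begin{equation*}
r \;=\; \sum_{k=0}^\infty p_k\Bigl[1-(1-r)^k - k\,r\,(1-r)^{k-1}\Bigr] \;=\; 1 - g(1-r) - r\,g'(1-r).
\end{equation*}
Substituting $1-r = p$ and rearranging gives $p = g(p) + (1-p)\,g'(p)$, which is the claim. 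Differentiability of $g$ on $[0,1]$ and absolute convergence of $\sum p_k\,k\,p^{k-1}$ are guaranteed for $p<1$; the boundary case $p=1$ is consistent with the equation since then both sides equal $1$.

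I do not foresee a serious obstacle: the only point requiring a little care is justifying the interchange of sum and conditioning (i.e.\ verifying that $g'(p) = \sum_k k\,p_k\,p^{k-1}$ is finite for the relevant $p$), which follows because the series defining $g$ converges on a neighbourhood of $p$ whenever $p<1$, and because for the trivial case $p=1$ both identities are self-evident. Everything else is bookkeeping on the binomial expansion and a direct appeal to the structural characterisation from Lemma \ref{char}.
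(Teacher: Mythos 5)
Your proposal is correct and follows essentially the same route as the paper: both reduce the claim to the structural characterisation of Lemma \ref{char} (and its corollary), condition on $Z_1=k$, and use that the events ``child $v$ roots a copy of $T_2$'' are i.i.d.\ Bernoulli$(1-p)$ across the first generation, so that the binomial computation gives $p=g(p)+(1-p)\,g'(p)$ and $\bar{p}=g(p)$. The only cosmetic differences are that you compute the complementary probability (at least two successes) where the paper computes $\Prob(X_k<2)$ directly, and that you add a remark on the convergence of $g'(p)$ which the paper leaves implicit.
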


\begin{proof}
	According to Lemma \ref{char}, the probability $1-p$ of Maker winning the game equals the probability of $T$ containing a copy of $T_2$ rooted in $\mathbf{0}$. The latter can be described recursively with respect to the number of children the root has.
	\begin{align*}
		p&=\Prob(\mathbf{0} \text{ is not the root of a copy of }T_2)\\
		&=\Prob(\mathbf{0} \text{ does not have two children being the root of a copy of }T_2)\\
		&=\sum_{k=0}^\infty \Prob(\xi_\mathbf{0}=k)\cdot \Prob(X_k< 2),
	\end{align*}
	where $X_k\sim\mathrm{Bin}(k,1-p)$, as every node in generation 1 is the root of an independent copy of $T$ and the probability that a fixed such copy contains $T_2$ sharing the root is consequently again $1-p$.
	
	Plugging in the probabilities coming from the binomial distributions and rewriting the equation (mainly for aesthetic and computational convenience), we arrive at
	\[p=p_0+p_1+\sum_{k=2}^\infty p_k\cdot\big(p^k+k\,(1-p)\,p^{k-1}\big)=g(p)+(1-p)\cdot g'(p).\]
	
	In the same vein, the probability that Breaker wins when Maker starts, can be written as
	\[\bar{p}=\sum_{k=0}^\infty \Prob(\xi_\mathbf{0}=k)\cdot\Prob(X_k=0)=\sum_{k=0}^\infty p_k\cdot p^k=g(p),\]
	which establishes the second claim.
\end{proof}

The recursive equation for $p$, in this regime the probability that the underlying GW-tree does not contain a complete binary tree sharing the root, was already established more than 30 years ago, cf.\ Thm.\ 1 in \cite{Dek1}. It is further straightforward to verify that $p$ is in fact the smallest solution to \eqref{fp2} in the interval $[0,1]$, see the above reference for details.

\subsection{A few tractable examples}
With this in hand, we can tackle a few simple cases. As the equation is implicit in $p$, however, the pgf $g$ makes it too hard to solve \eqref{fp2} analytically in most cases, even for standard distributions for $\xi$. From the monotonicity in the underlying graph, we generally get a phase transition in the parameter(s) of the offspring distribution $\xi$ in the sense
that $p$ equals $1$ for the stochastically smaller end of the sprectrum, followed by a jump to values smaller than one at the corresponding critical value (denoted $s_\mathrm{c}, \lambda_\mathrm{c}, r_\mathrm{c}$ in the sequel).

\begin{proposition}[Geometric distribution]\label{geoprop}
	The probability of Breaker winning on an instantly revealed GW-tree with offspring distribution 
	\begin{enumerate}[(a)]
		\item $\xi\sim\mathrm{Geo}_{\IN}(s)$, i.e.\ $p_k=s\,(1-s)^{k-1}$ for $k\geq 1$, equals
		\[p=\frac{1-\sqrt{1-4s}}{2\,(1-s)}\quad\text{and}\quad \bar{p}=\frac{1-2s-\sqrt{1-4s}}{2\,(1-s)}=p-\frac{s}{1-s}\]
		respectively (depending on who starts the game) for parameter $s\in(0,\frac14]$. Further it holds $p=\bar{p}=1$, for $s\in(\frac14,1]$.
		\item $\xi\sim\mathrm{Geo}_{\IN_0}(s)$, i.e.\ $p_k=s\,(1-s)^{k}$ for $k\geq 0$, equals
		\[p=\frac{1+s-\sqrt{(1-s)\,(1-5s)}}{2\,(1-s)}\text{  and  }\  \bar{p}=\frac{1-s-\sqrt{(1-s)\,(1-5s)}}{2\,(1-s)}=p-q\]
		respectively (depending on who starts the game) for parameter $s\in(0,\frac15]$. Further it holds $p=\bar{p}=1$, for $s\in(\frac15,1]$.
	\end{enumerate}
\end{proposition}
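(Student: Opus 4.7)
\medskip
\noindent\textbf{Proof proposal.}

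The plan is to reduce everything to a direct substitution into Theorem \ref{formula}. Case (a) is the cleaner warm-up: for $\xi\sim\mathrm{Geo}_\IN(s)$ one has
$g(x)=\frac{sx}{1-(1-s)x}$ and $g'(x)=\frac{s}{(1-(1-s)x)^2}$. Writing $a=1-s$ and multiplying $p=g(p)+(1-p)g'(p)$ through by $(1-ap)^2$, I would collect terms so that the common factor $(1-p)$ emerges; the resulting identity becomes
\[
 p\,a\,(1-ap)(1-p)=s\,(1-p),
\]
so that either $p=1$ or $a^2p^2-ap+s=0$. Applying the quadratic formula and invoking Dekking's result (Thm.~1 in \cite{Dek1}) that $p$ is the smallest solution of \eqref{fp2} in $[0,1]$ singles out $p=(1-\sqrt{1-4s})/(2(1-s))$, valid exactly when $1-4s\geq 0$; otherwise the quadratic has no real root in $[0,1)$ and only $p=1$ survives, giving the critical value $s_\mathrm{c}=\tfrac14$. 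The value of $\bar{p}=g(p)$ follows by direct substitution, using $1-ap=(1+\sqrt{1-4s})/2$ and rationalising, and the relation $p-\bar p=s/(1-s)$ is then a one-line check.

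Case (b) is structurally the same but algebraically heavier because $p_0>0$, so $q=s/(1-s)>0$ and the equation will be genuinely cubic rather than quadratic. Here $g(x)=s/(1-(1-s)x)$ and $g'(x)=s(1-s)/(1-(1-s)x)^2$. Multiplying $p=g(p)+(1-p)g'(p)$ by $(1-ap)^2$ (with $a=1-s$) yields a cubic
\[
 a^2p^3-2ap^2+(1+2sa)p-s(1+a)=0.
\]
Because $p=1$ is always a root of \eqref{fp2} (as $g(1)=1$ and $(1-1)g'(1)=0$), I would divide through by $(p-1)$ to obtain the quadratic
\[
 (1-s)^2p^2-(1-s^2)p+s(2-s)=0,
\]
whose discriminant simplifies to $(1-s)^3(1-5s)$ after noticing that $(1+s)^2-4s(2-s)=(1-s)(1-5s)$. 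The smallest root is exactly the stated formula, and it lies in $[0,1)$ precisely when $s\leq\tfrac15$, which fixes $s_\mathrm{c}=\tfrac15$ (again appealing to the smallest-solution characterisation of $p$). Finally, $\bar p=g(p)$ is computed from $1-(1-s)p=(1-s+\sqrt{(1-s)(1-5s)})/2$ and a rationalisation by the conjugate, which causes a factor $(1-s)$ to cancel and leaves the claimed expression; the identity $p-\bar p=q=s/(1-s)$ is then immediate.

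The main obstacle is purely bookkeeping in case (b): organising the cubic so that the trivial root $p=1$ can be cleanly divided out and recognising that the resulting discriminant factors as $(1-s)(1-5s)$ up to the overall square $(1-s)^2$. Everything else is routine algebra, and no tools beyond Theorem \ref{formula} and the smallest-solution selection rule are needed.
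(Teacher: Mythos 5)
Your proposal is correct and follows essentially the same route as the paper: substitute the geometric pgf and its derivative into equation \eqref{fp2} of Theorem \ref{formula}, factor out the trivial root $p=1$, select the smallest root in $[0,1]$ (citing the characterisation from \cite{Dek1}), and obtain $\bar{p}=g(p)$ by direct evaluation. The paper's proof merely states the resulting solutions without showing the factorisation and discriminant bookkeeping, so your write-up is the same argument carried out in more explicit detail.
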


\begin{proof}
	\begin{enumerate}[(a)]
		\item In the case of $\xi\sim\mathrm{Geo}_{\IN}(s)$, the corresponding pgf and equation \eqref{fp2} read
		\[g(x)=\frac{sx}{1-(1-s)x}\quad\text{and}\quad p=\frac{sp}{1-(1-s)p}+\frac{s(1-p)}{(1-(1-s)p)^2}\]
		respectively. The latter has solutions $p=1$ and if $s\leq\frac14$ additionally \[p=\frac{1\pm\sqrt{1-4s}}{2(1-s)}.\]
		This together with the fact that $p$ is the smallest solution to \eqref{fp2} in $[0,1]$ establishes the first part of the claim. To arrive at $\bar{p}$ we simply have to evaluate $g(p)$, cf.\ Theorem \ref{formula}:
		\[p=\frac{1-\sqrt{1-4s}}{2\,(1-s)}\quad\text{and}\quad \bar{p}=\frac{1-2s-\sqrt{1-4s}}{2\,(1-s)}=p-\frac{s}{1-s}\]
		\item In the case of $\xi\sim\mathrm{Geo}_{\IN_0}(s)$ -- number of failures until first success in a Bernoulli-trials experiment --  the pgf and \eqref{fp2} read
		\[g(x)=\frac{s}{1-(1-s)x}\quad\text{and}\quad p=\frac{s}{1-(1-s)p}+\frac{s(1-s)(1-p)}{(1-(1-s)p)^2}\]
		respectively. The latter has solutions $p=1$ and if $s\leq\frac15$ additionally \[p=\frac{1+s\pm\sqrt{(1-s)(1-5s)}}{2(1-s)}.\]
		Since the extinction probability equals $q(s)=\frac{s}{1-s}$ in this case, we can rewrite $\bar{p}$ given by
		$g(p)$ as $p-q$.\vspace{-1em}
	\end{enumerate}
\end{proof}

\vspace{-1em}
\noindent
Note that the difference $p-\bar{p}$ equals $\frac{s}{1-s}$ in both cases, while the extinction probability is 0 in the first case, since $\xi\sim\mathrm{Geo}_{\IN}(s)$ guarantees at least one descendant for each individual (i.e.\ $p_0=0$). Further it is worth mentioning that the phase transition at the critical values $s_\mathrm{c}=\frac14$ and $s_\mathrm{c}=\frac15$ respectively is a discontinuous one as the probabilities of Breaker winning jump to $1$, see Figure \ref{Geo} below for an illustration. The critical parameters in both cases correspond to a mean value of $\mu=4$ for the respective offspring distribution. The calculation of $1-p$ for $\xi\sim\mathrm{Geo}_{\IN_0}(s)$ was included as a special case in Section 3 of \cite{Dek2}.
\begin{figure}[H]
	\centering
	\includegraphics[scale=1]{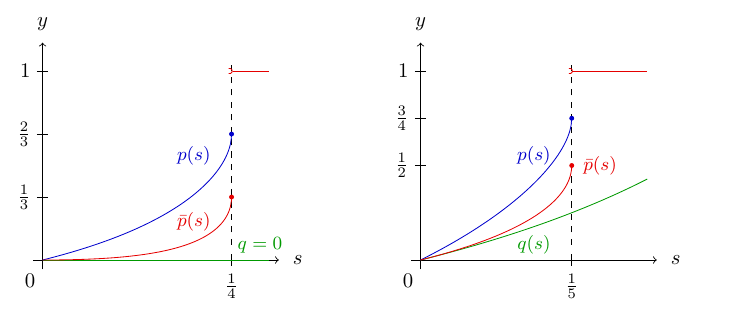}
	\caption{Winning probabilities for Breaker on a GW-tree, which arises from a geometric offspring distribution on $\IN$ (left) respectively $\IN_0$ (right) and is revealed at the beginning, depending on the corresponding parameter $s\in[0,1]$. Depicted in green, the corresponding extinction probability of the tree. \label{Geo}}
\end{figure}

In case the random number of decendants per individual follows a Poisson distribution, equation
\eqref{fp2} is no longer analytically solvable. But one can still use Theorem\ \ref{formula} to numerically compute the winning probabilities.

\begin{example}[Poisson distribution]
	For $\xi\sim\mathrm{Poi}(\lambda)$, the pgf reads $g(x)=\mathrm{e}^{\lambda(x-1)}$ and consequently, \eqref{fp2} becomes
	\[p=\big(1+\lambda(1-p)\big)\mathrm{e}^{\lambda(p-1)}\]
	or $1+\frac{x}{\lambda}=(1-x)\,\mathrm{e}^x$, where we substituted the exponent by $x$. For small $\lambda>0$, this equation has only the trivial solution $x=0$ (which translates to $p=1$). Increasing $\lambda$, a non-trivial solution enters the picture once the linear function on the left is a tangent to the graph of $x\mapsto (1-x)\,\mathrm{e}^x$ in some $x_0<0$. Using this gradient condition we find that $x_0$ is a solution to $\mathrm{e}^{-x}=1-x+x^2$, which leads to the critical parameter $\lambda_\mathrm{c}= 3.3509188715\dots$ For this mean in $\xi\sim\mathrm{Poi}(\lambda)$, the winning probability of Breaker (using again Lemma \ref{char} and Theorem\ \ref{formula}) drops from 1 to $p_\mathrm{c}=0.46483869\dots$, see Figure \ref{Poi1} below for an illustration. Having evaluated $p$ numerically, we can calculate $\bar{p}=g(p)$ as before.
	
	\begin{figure}[H]
		\centering
		\includegraphics[scale=1]{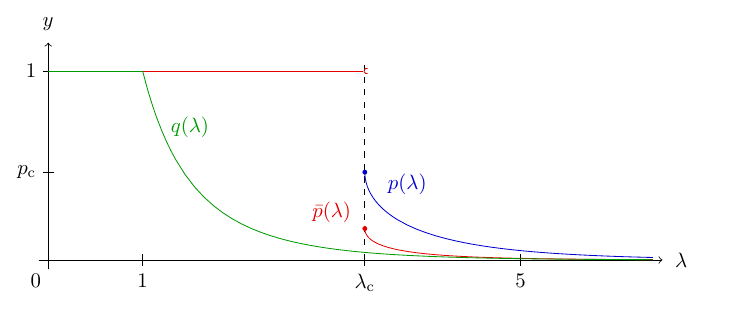}
		\caption{Winning probabilities for Breaker on a GW-tree, which arises from a Poisson offspring distribution, $\xi\sim\mathrm{Poi}(\lambda)$. \label{Poi1}}
	\end{figure}
	
	It is worth noting that the mean in the critical case for a Poisson offspring distribution equals $\lambda_\mathrm{c}\neq 4$. Consequently, the fact that the phase transition with respect to $q$ (extinction) occurs at $\mu=1$ for all (non-constant) offspring distributions (cf.\ Theorem \ref{extinction}) does not have an equivalent, when it comes to the phase transition with respect to $p$ (absence of a complete binary subtree rooted at $\mathbf{0}$). This was already established  in \cite{Dek1} by a simple counter-example, based on the fact that changing $p_0$ and $p_1$, while keeping $p_0+p_1$ constant, changes all moments $\IE(\xi^n),\ n\in\IN,$ but not $p$.
\end{example}

\subsection{Conditions for Maker to have a chance of winning}
In \cite{Dek2} both a necessary and a sufficient condition for the existence of a complete $N$-ary subtree rooted at the ancestor $\mathbf{0}$ of a GW-tree for arbitrary offspring distribution has been established. For $N=2$ this refers to a binary tree, more specifically $p<1$ in our setting (cf.\ Lemma \ref{char}).

\begin{proposition}\label{bound}
	Consider the $(1,1)$-Maker-Breaker game on an instantly revealed GW-tree arising from a general offspring distribution $\{\Prob(\xi=k)=p_k,\,k\in\IN_0\}$, in which Breaker starts. Then the following conditions hold:
	\begin{enumerate}[(a)]
		\item Maker has a positive chance of winning (p<1), if
		\begin{equation*}
			\IE\Big(\frac{1}{\xi+1}\Big)\leq\frac14+\frac{p_0}{2}+\frac14\,(p_0+p_1)^2.
		\end{equation*}
		\item If $g''(x)<\frac{1}{1-x}$ for all $x\in(0,1)$, the game is an a.s.\ win for Breaker (p=1).
	\end{enumerate} 
\end{proposition}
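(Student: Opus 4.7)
The plan is to recast both statements in terms of the function $h(x) := g(x) + (1-x)\,g'(x) - x$: by Theorem~\ref{formula} the probability $p$ is the smallest root of $h$ in $[0,1]$, and $h(1) = 0$ always. Thus (b) reduces to showing that $h > 0$ on $[0,1)$, while for (a) I will argue by contraposition, assuming $p = 1$ and extracting a strict inequality opposite to the hypothesized one.

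Part (b) is essentially a one-line calculation. Differentiating gives $h'(x) = (1-x)\,g''(x) - 1$, so the hypothesis $(1-x)\,g''(x) < 1$ on $(0,1)$ makes $h$ strictly decreasing there. Combined with $h(1) = 0$, this forces $h > 0$ on $[0,1)$, hence $p = 1$.

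For part (a), two ingredients drive the argument. The first is the pointwise lower bound
\begin{equation*}
h(x) \;\geq\; p_0 + p_1 - x \qquad \text{for all } x \in [0,1],
\end{equation*}
obtained by truncating $g(x) \geq p_0 + p_1 x$ and $g'(x) \geq p_1$ at the first two terms and using $1-x \geq 0$. The second is the integral identity
\begin{equation*}
\int_0^1 h(x)\,dx \;=\; 2\,\IE\Big(\frac{1}{\xi+1}\Big) - p_0 - \tfrac12,
\end{equation*}
which drops out of $\int_0^1 g(x)\,dx = \IE\big(\tfrac{1}{\xi+1}\big)$ together with a single integration by parts giving $\int_0^1 (1-x)\,g'(x)\,dx = \IE\big(\tfrac{1}{\xi+1}\big) - p_0$. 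Now assume $p = 1$: the minimality of $p$, the continuity of $h$, and $h(0) = p_0 + p_1 \geq 0$ force $h > 0$ on all of $[0,1)$. Splitting the integral at $x = p_0 + p_1$, the pointwise bound controls the left piece via $\int_0^{p_0+p_1} h \geq \tfrac12(p_0+p_1)^2$, while strict positivity of $h$ on the non-degenerate interval $(p_0+p_1,\,1)$ -- non-degenerate because the standing supercriticality $\mu > 1$ forces $p_0+p_1 < 1$ -- contributes something strictly positive, so $\int_0^1 h(x)\,dx > \tfrac12(p_0+p_1)^2$. Feeding this back through the integral identity yields $\IE\big(\tfrac{1}{\xi+1}\big) > \tfrac14 + \tfrac{p_0}{2} + \tfrac14 (p_0+p_1)^2$, precisely the negation of the hypothesis.

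The only obstacle I anticipate is bookkeeping at the boundary. If $p_0 + p_1 = 0$ then $h(0) = 0$, so $p = 0$ and (a) is trivial; if $p_0 + p_1 = 1$ then $\mu \leq 1$, which lies outside the supercritical regime assumed throughout. With these degenerate cases disposed of, the argument reduces to the pointwise bound, one integration by parts, and the resulting strict inequality above.
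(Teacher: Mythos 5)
Your proof is correct, but it takes a genuinely different route from the paper: the paper offers no self-contained argument at all, simply deducing the proposition as the special case $N=2$ of Thm.\ 3 in Pakes--Dekking \cite{Dek2}. You instead reconstruct an elementary proof from the paper's own Theorem \ref{formula}, and every step checks out: $h'(x)=(1-x)\,g''(x)-1$, so (b) is immediate from strict monotonicity of $h$ together with $h(1)=0$; the identity $\int_0^1 h(x)\,dx = 2\,\IE\big(\tfrac{1}{\xi+1}\big)-p_0-\tfrac12$ follows from $\int_0^1 g(x)\,dx=\IE\big(\tfrac{1}{\xi+1}\big)$ and one integration by parts; and the contrapositive of (a) follows by splitting the integral at $p_0+p_1$, using the truncation bound $h(x)\geq p_0+p_1-x$ on the left piece and strict positivity of $h$ on $[0,1)$ (which holds because $p$ is the \emph{smallest} root of $h$, as asserted after Theorem \ref{formula}) on the right piece. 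Two remarks are worth making. First, your appeal to the standing supercriticality assumption $\mu>1$ (announced in Section \ref{sec3}) to exclude $p_0+p_1=1$ is not mere bookkeeping but essential to the truth of the statement: for $p_1=1$ one has $\IE\big(\tfrac{1}{\xi+1}\big)=\tfrac12$, which equals the right-hand side in (a), yet $p=1$; so with the non-strict inequality, part (a) would be false without that assumption, and your proof is exactly where this becomes visible. Second, your argument buys something the paper's citation does not: it shows both parts as consequences of the single function $h(x)=g(x)+(1-x)\,g'(x)-x$ and makes transparent where the constants $\tfrac14+\tfrac{p_0}{2}+\tfrac14(p_0+p_1)^2$ come from, namely the area $\tfrac12(p_0+p_1)^2$ under the truncation bound fed through the integral identity; the price is that it is tied to $N=2$, whereas the cited theorem covers general $N$-ary subtrees.
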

This result follows directly from Thm.\ 3 in \cite{Dek2}, considering the special case $N=2$.

\begin{example}[Binomial distribution]
	For $\xi\sim\mathrm{Bin}(n,r)$, we have $p_0=(1-r)^n$ and $p_1=nr\,(1-r)^{n-1}$ as well as $g(x)=(1-r+rx)^n$. Solving
	\eqref{fp2} numerically, we can find the critical values $r_\mathrm{c}(n)$ and calculate the winning probabilities for Breaker in both cases, $p(n,r)$ for Breaker starting and $\bar{p}(n,r)$ for Maker starting the game, in the same way as before, see Figure \ref{Bin2} and the table below for numerical results.
	
	\begin{figure}[H]
		\centering
		\includegraphics[scale=1]{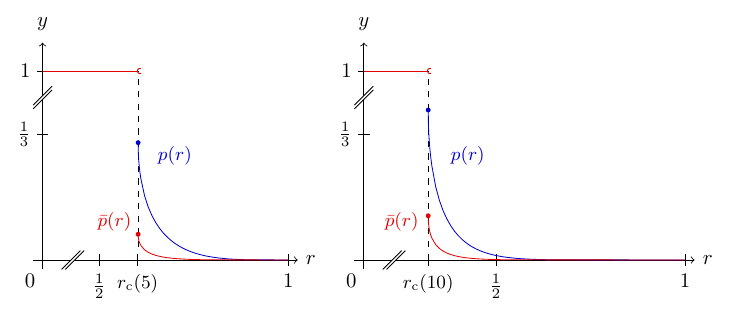}
		\caption{Winning probabilities for Breaker on an instantly revealed GW-tree, which arises from a Binomial offspring distribution, $\xi\sim\mathrm{Bin}(n,r)$, for $n=5$ (left) and $n=10$ (right). \label{Bin2}}
	\end{figure}	
	\begin{center}
		\begin{tabular}{ m{0.2cm}||c|c|c|c|c|c|c|c|c|c @{}m{0cm}@{}}
			\hline
			$n$&1&2&3&4&5&6&7&8&9&10& \\[0.3cm] \hline
			$r_\mathrm{c}$&\small1&\small1&$\nicefrac89$&\small0.7248&\small0.6028&\small0.5137&\small0.4468&\small0.3949&\small0.3537&\small0.3202& \\[0.1cm]
			$p_\mathrm{c}$&\small1&\small1&$\nicefrac{5}{32}$&\small0.2584&\small0.3105&\small0.3418&\small0.3625&\small0.3773&\small0.3883&\small0.3969& \\[0.cm]
			\hline
		\end{tabular}
		\captionof{table}{Critical parameter $r_\mathrm{c}$ and the corresponding winning probability for Breaker, when $\xi\sim\mathrm{Bin}(n,r)$ and small values of $n$.}
	\end{center}
	In order to evaluate the bounds established in Proposition \ref{bound}, we use Fubini to get
	\[\IE\Big(\frac{1}{\xi+1}\Big)=\int_0^1\IE(x^\xi)\,dx=\int_0^1 (1-r+rx)^n\,dx=\frac{1-(1-r)^{n+1}}{r\,(n+1)}.\]
	So according to part (a) of the proposition, $p(n,r)<1$ holds for pairs $(n,r)$ which fulfill
	\[\frac{1-(1-r)^{n+1}}{r\,(n+1)}\leq\frac14+\frac12\,(1-r)^n+\frac14\,\big((1-r)^n+nr\,(1-r)^{n-1}\big)^2.\]
	For fixed $n$, this can be solved numerically and leads to a value $r_>(n)$, above which Maker has a positive chance of winning the game.
	
	Turning to part (b), it is easy to see that $g''(x)=0$ for $n=1$ (hence the inequality holds trivially on $(0,1)$ for arbitrary $r$) and $g''(x)=2r^2$ for $n=2$, which guarantees an a.s.\ Breaker win for $r\geq\frac{1}{\sqrt{2}}$. For $n\geq3$, the inequality
	\[n(n-1)r^2\,(1-r+rx)^{n-2}<\frac{1}{1-x}\]
	holds for $r\geq0$ sufficiently small and increasing it, the function on the left hand side coincides with the right hand side in $(0,1)$ for the first time when also the derivatives of both functions coincide at this point. Using this to simplify the condition, we find that the inequality holds for all $0<x<1$ if $r$ is less than $r_<(n)=\frac1n\big(\frac{n-1}{n-2}\big)^{n-2}$. Consequently, the critical parameter $r_\mathrm{c}(n)$, which separates the trivial regime ($p=1$) from the competitive one ($p<1$), lies in the interval $[r_<(n),r_>(n)]$. In Figure \ref{Bin1}, the critical values and the corresponding bounds are illustrated for $n\leq 15$.
	\begin{figure}[H]\hspace{-0.3cm}
		\includegraphics[scale=1]{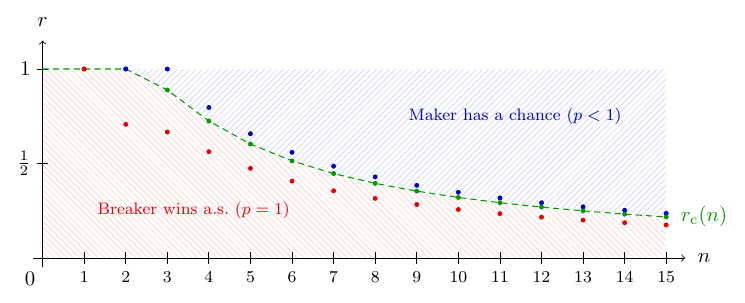}
		\caption{Depicted are the threshold $r_\mathrm{c}$ (green) plus corresponding bounds $r_<$ (red) and $r_>$ (blue), coming from Proposition \ref{bound}, for Binomial offspring distribution and small values of $n$. \label{Bin1}}
	\end{figure}
	Note that $\lim_{n\to\infty}n\,r_<(n)=\mathrm{e}$, which is consistent with the corresponding lower bound for the Poisson distribution, see below.
\end{example}

Evaluating the bounds from Proposition \ref{bound} also for the cases examined earlier, we get for $\xi\sim\mathrm{Geo}_{\IN}(s)$ that $p<1$ for $s\leq0.21332$ and 
$p=1$ for $s\geq0.29629$, as opposed to the true threshold $s_\mathrm{c}=\frac14$ (cf.\ Proposition \ref{geoprop}).
With offspring distribution $\xi\sim\mathrm{Geo}_{\IN_0}(s)$ instead, the proposition guarantees $p<1$ for $s\leq0.16401$ and $p=1$ for $s\geq 0.22857$, while $s_\mathrm{c}=\frac15$.
Having Poisson distributed offspring numbers, $\xi\sim\mathrm{Poi}(\lambda)$, we find that $\lambda\geq3.654328$ guarantees $p<1$ and $\lambda\leq\mathrm{e}$ ensures $p=1$, while the true threshold is $\lambda_\mathrm{c}= 3.3509188715\dots$

\section{No extra information, \texorpdfstring{$I(v)=\emptyset$}{I(v)=0}}\label{empty}

In the most restrictive regime, in which no further information is revealed but the internal and external nodes, the strategies are simply totally random. As there is complete symmetry among all external nodes (being the root of an independent copy of our initial branching process) both players in turn remove/fixate an edge between an internal and external node picked arbitrarily.

The number of available edges/external nodes (taken at times when it is Maker's turn) is hence a random walk on $\IZ$ with stepsize $\xi-2$ (observe that both Maker and Breaker use up one edge and a random number of $\xi$ is added during Maker's turn). Revealing the root $\mathbf{0}$, there are $\xi_\mathbf{0}$ external nodes available; hence the initial value of the corresponding random walk is either 1 (if Breaker starts) or 2 (if Maker starts). Let us formally define the \emph{embedded random walk}
\begin{equation}\label{rw}
	S_0=1,\quad S_{n}=S_{n-1}+(\xi_n-2)\quad\text{for}\ n\in\IN,
\end{equation}
where $(\xi_n)_{n\in\IN}$ is an i.i.d.\ sequence of copies of $\xi$, the offspring distribution. Then the probability of Maker winning becomes $1-p=\Prob(S_n>0 \text{ for all }n\in\IN)$.

Let us first consider offspring distributions concentrated on $\{1,2,3\}$, i.e.\ $\Prob(\xi\in\{1,2,3\})=1$, as this
condition makes the number of external nodes available to Maker a {\em homogeneous discrete birth-and-death chain}.
Due to the fact that the (lazy) simple symmetric random walk on $\IZ$ is recurrent, Breaker will obviously win the game with probability 1 if played on a GW-tree arising from any offspring distribution with
\[p_1=p_3=r,\ p_2=1-2r,\quad\text{for arbitrary }0<r\leq\tfrac12,\]
where $p_k=\Prob(\xi=k)$ as before. By a simple coupling argument (or monotonicity of Maker-Breaker in the underlying graph), more generally $0\neq p_1\geq p_3$ gives Breaker an almost sure win (even if Maker starts, i.e.\ $\bar{p}=p=1$). However, if $p_3>p_1$ (which makes the birth-and-death chain transient) we can conclude a first non-trivial result for this regime:

\begin{proposition}\label{123}
 In the regime $I(v)=\emptyset$, if $p_1+p_2+p_3=1$ and $p_1<p_3$, then the probability of Breaker winning on a GW-tree arising from offspring distribution $\xi$, with $\Prob(\xi=k)=p_k$, equals
 \[p=\frac{p_1}{p_3}\quad\text{and}\quad \bar{p}=\left(\frac{p_1}{p_3}\right)^2\]
 respectively, depending on who starts the game.
\end{proposition}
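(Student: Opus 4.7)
The plan is to build on the random-walk framework set up in Section~\ref{empty}. In the regime $I(v)=\emptyset$ any two external nodes are indistinguishable — each roots an independent copy of $T$ with no further information — so both players may as well pick uniformly at random among the available edges, and the game reduces to tracking the count of external edges over time. As noted in \eqref{rw}, this count evolves as a random walk with i.i.d.\ increments distributed as $\xi-2$, and Breaker's victory corresponds precisely to the walk reaching $0$. Under the assumption $\Prob(\xi\in\{1,2,3\})=1$ the increments lie in $\{-1,0,+1\}$ with probabilities $p_1,p_2,p_3$, making the walk a lazy nearest-neighbor walk on $\IZ$ with positive drift (since $p_1<p_3$).

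The heart of the proof is a classical gambler's ruin calculation. Passing to the embedded non-lazy walk, up- and down-steps occur with conditional probabilities $\tfrac{p_3}{p_1+p_3}$ and $\tfrac{p_1}{p_1+p_3}$ respectively, so the probability of ever hitting state $0$ starting from state $k>0$ equals $(p_1/p_3)^k$. Plugging in the initial values $S_0=1$ and $S_0=2$ from \eqref{rw} immediately yields $p=p_1/p_3$ and $\bar p=(p_1/p_3)^2$.

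If one prefers not to rely on the convention $S_0\in\{1,2\}$, the same conclusions follow by conditioning on $\xi_\mathbf{0}=k$, in which case the walk starts at $k-1$ (Breaker) or $k$ (Maker) after the opening move and the conditional hitting probabilities are $(p_1/p_3)^{k-1}$ or $(p_1/p_3)^k$; summing against the pmf of $\xi$ and factoring out $p_1/p_3$ collapses the series via $p_1+p_2+p_3=1$. As a cross-check, $p_0=0$ implies $\bar p=g(p)$ by Theorem~\ref{arch}, and indeed $g(p_1/p_3)=(p_1/p_3)^2$ after factoring. The only real subtlety is the symmetry reduction underpinning the walk description; the algebra itself is routine.
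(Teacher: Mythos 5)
Your proposal is correct and takes essentially the same route as the paper: both work in the random-walk framework of Section \ref{empty}, note the increments lie in $\{-1,0,+1\}$, dispose of the lazy steps (the paper by assuming $p_2=0$ w.l.o.g., you by passing to the embedded non-lazy walk), and then apply the classical birth-and-death/gambler's-ruin hitting probability $(p_1/p_3)^k$ to the start states $S_0=1$ and $S_0=2$. The only cosmetic difference is that the paper derives that hitting probability explicitly via the strong Markov property and the quadratic $\theta_1=p_1+p_3\,\theta_1^2$ (using transience to select the root below $1$), whereas you quote it as a known formula.
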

\begin{proof}
We can assume $p_2=0$, hence $p_3=1-p_1$,  w.l.o.g.\ as including the possibility of loops will not change the
probability of ever hitting $0$.
Writing $\theta_k:=\Prob(\text{chain hits 0 after start in }k)$, we get $\theta_0=1$ and $\theta_2=\theta_1^2$, by the strong Markov property applied to the first time hitting 1 after start in 2. Conditioning on the first step, this gives the equation
\[\theta_1=p_1\cdot\theta_{0}+p_3\cdot\theta_{2}=p_1+(1-p_1)\cdot\theta_{1}^{\,2},\quad\text{with solutions }\theta_1=\frac{1\pm |1-2p_1|}{2\,(1-p_1)}.\]
The fact that $p_1<\frac12$ by assumption and $\theta_1<1$ by transience of the chain, leads to  $\theta_1=\frac{p_1}{1-p_1}$. For further details, see e.g.\ Chap.\ 2.5 in \cite{LPW}.
\end{proof}

Note that $p_1<p_3$ coincides with the intuition that $\mu=2+(p_3-p_1)$ has to be larger than 2 for Maker to have a chance.
The above line of argument can in fact be used to settle even the case of a geometric offspring distribution $\xi\sim\text{Geo}_{\IN}(s)$:

\begin{proposition}[Geometric distribution]\label{emptygeo}
	When $I(v)=\emptyset$, the probability of Breaker winning on a GW-tree arising from offspring distribution $\xi\sim\mathrm{Geo}_{\IN}(s)$ equals for fixed $s\in[0,\frac12]$
	\[p=\frac{s}{1-s}\quad\text{and}\quad \bar{p}=\left(\frac{s}{1-s}\right)^2\]
	respectively, depending on who starts the game. In addition to that, for $s\in(\frac12,1]$ it holds $p=\bar{p}=1$.
\end{proposition}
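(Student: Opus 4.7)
The plan is to reduce the question to computing the hitting probability of $0$ for the embedded random walk $(S_n)$ from \eqref{rw}, and then exploit both the left-continuity of the walk (since $\xi\geq 1$ a.s.\ forces $\xi-2\geq -1$) and the explicit form of the geometric pgf, so that the equation characterizing this hitting probability reduces to a quadratic.

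First I would observe that, writing $\theta_k:=\Prob(S_n=0 \text{ for some } n \mid S_0=k)$, left-continuity together with the strong Markov property applied at the successive first passage times through levels $k-1, k-2,\ldots, 1, 0$ yields $\theta_k=\theta_1^{\,k}$, exactly as in the proof of Proposition \ref{123}. Since $p=\theta_1$ (walk started at $1$, i.e.\ Breaker opens) and $\bar{p}=\theta_2=\theta_1^{\,2}$ (walk started at $2$, i.e.\ Maker opens), it suffices to pin down $\theta_1$.

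Next I would condition on the first step of the walk started at $1$, which lands in $\xi_1-1\in\IN_0$, to obtain
\[
\theta_1=\sum_{k=1}^\infty s\,(1-s)^{k-1}\,\theta_1^{\,k-1}=\frac{s}{1-(1-s)\,\theta_1},
\]
equivalently $(1-s)\,\theta_1^{\,2}-\theta_1+s=0$, a quadratic whose roots in $[0,1]$ are $\theta_1=1$ and $\theta_1=s/(1-s)$ (the latter lying in $[0,1]$ precisely when $s\leq\tfrac12$).

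The main obstacle, as usual in this kind of fixed-point argument, is to select the correct root. Since $\IE(\xi-2)=1/s-2$, the walk has strictly positive drift precisely when $s<\tfrac12$; in that regime Lemma \ref{drift} applied to the shifted increments guarantees that with positive probability the walk started at $1$ never drops to $0$, forcing $\theta_1<1$ and hence $\theta_1=s/(1-s)$. For $s=\tfrac12$ both roots coincide at $1$, which matches the continuous extension of $s\mapsto s/(1-s)$ from the left; for $s>\tfrac12$ the drift is negative, so by the law of large numbers the walk tends to $-\infty$ a.s.\ and every lower state is hit with probability one, giving $\theta_1=1$. Squaring yields the claimed value of $\bar{p}$, and the case $s=0$ is trivial since then $\xi\equiv 1$ a.s.\ (so the walk is constant $1$, decrements by one each step, and Breaker wins in one move).
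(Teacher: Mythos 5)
Your proof is correct in its main argument but takes a different route from the paper's. The paper proves Proposition \ref{emptygeo} by a coupling specific to the geometric law: it realizes the increments $\xi_k-2$ through Bernoulli$(s)$ trials, so that the embedded walk \eqref{rw}, with its steps suitably regrouped, becomes a homogeneous birth-and-death chain with birth probability $1-s$; the value $s/(1-s)$ is then inherited from the gambler's-ruin computation of Proposition \ref{123}. You instead work directly with the embedded walk, using that $p_0=0$ makes it skip-free to the left: $\theta_k=\theta_1^{\,k}$ by the strong Markov property, a first-step decomposition gives $\theta_1=g(\theta_1)/\theta_1$ (your quadratic is exactly $x^2=g(x)$ for the geometric pgf), and the correct root is selected by the drift criterion, with Lemma \ref{drift} ruling out $\theta_1=1$ when $s<\tfrac12$ and the law of large numbers forcing $\theta_1=1$ when $s>\tfrac12$. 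This is precisely the argument the paper gives later, in greater generality, as Theorem \ref{i=0}; indeed the paper remarks right after that theorem that Proposition \ref{emptygeo} can easily be recovered from it. What the paper's coupling buys is a self-contained reduction to classical birth-and-death chain facts, exploiting the memorylessness of the geometric law; what your route buys is generality (it works verbatim for any offspring law with $p_0=0$), at the cost of carrying out the root-selection analysis yourself, which you do correctly, including the clean observation that at $s=\tfrac12$ the double root forces $\theta_1=1$.

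One slip: your treatment of the endpoint $s=0$ is backwards. The distribution $\mathrm{Geo}_{\IN}(s)$ degenerates to $\xi\equiv1$ at $s=1$ (where the walk indeed decreases by one each step and Breaker wins, a case your negative-drift argument already covers), not at $s=0$. At $s=0$ there is no a.s.\ finite offspring variable at all -- formally every node has infinitely many children -- so Maker wins trivially and $p=0=s/(1-s)$, whereas your reasoning would assign $p=1$ there, contradicting the claimed formula. This concerns only a degenerate boundary point, but as written the sentence is false and should be corrected or the point $s=0$ treated separately.
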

\begin{proof}
	Let $(X_k)_{k\in\IN}$ be a sequence of i.i.d.\ $\mathrm{Ber}(s)$ random variables (commonly referred to as Bernoulli trials). Set $\xi_0=0$ and $\xi_n=\min\{k\in\IN:\; X_{\xi_{n-1}+k}=1\}$ for $n\in\IN$. Then the sequence $(\xi_n)_{n\in\IN}$ is i.i.d.\ $\mathrm{Geo}_{\IN}(s)$. To recycle the idea of proof used in Proposition \ref{123} it is crucial to notice that 
	\[\xi_1=\sum_{k=1}^{\xi_1}1=2+\sum_{k=1}^{\xi_1}(1-2X_k)\]
	and more generally
	\[\sum_{k=1}^{n}(\xi_k-2)=\sum_{k=1}^{S_n}(1-2X_k),\]
	where $S_n=\sum_{k=1}^n\xi_k$. As the left hand side describes a random walk on $\IZ$ with stepsize distribution $\xi-2$ and the right hand side a homogeneous discrete birth-and-death chain with birth probability $1-s$ and its steps grouped according to the sequence of down-steps (deaths), the probability of staying strictly positive is the same for both and the claim follows.
\end{proof}

In fact, with a similar reasoning as in the proof of Proposition \ref{123}, we can derive an equation for the winning
probability of Breaker in this regime, that holds for general {\em strictly positive} offspring distribution.

\begin{theorem}\label{i=0} Consider $\xi$ with $p_0=0$, where as before $p_k=\Prob(\xi=k)$, for $k\in\IN_0$, is the distribution's pmf and $g$ its pgf. Then the probability $p$ of Breaker winning on a GW-tree arising from offspring distribution $\xi$, when the nodes carry no further information is a solution to
	\begin{equation}\label{fp1}
		x^2=g(x).
	\end{equation}
    It is equal to $0$ if $p_1=0$, to $1$ if $\mu\leq 2,\ p_1>0$ and to the (unique) solution in $(0,1)$ in all non-trivial cases ($\mu>2,\ p_1>0$). When Maker begins, the probability of Breaker winning becomes $\bar{p}=g(p)=p^2$.
\end{theorem}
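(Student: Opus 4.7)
The plan is to leverage the random walk reduction already set up at the beginning of this section, together with the apparatus for left-continuous walks recalled in Section~\ref{sec2}. In the regime $I(v) = \emptyset$ all external nodes look identical to both players, so each in their turn simply picks uniformly at random from the currently available edges; consequently the count of external edges evolves as the walk $(S_n)$ defined in \eqref{rw}, started at $S_0 = 1$. Breaker wins if and only if $S_n$ ever hits $0$, and by translation this event has the same probability as $\rho = \Prob(\tau_{-1} < \infty)$ for the walk started at $0$, as defined in \eqref{probs}.

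Since $p_0 = 0$, the increment $\xi - 2$ is bounded below by $-1$, so $(S_n)$ is left-continuous with increment pgf $\gamma(x) = \IE(x^{\xi - 2}) = g(x)/x^2$. The standard characterization of first-passage probabilities for skip-free walks (Lemma~2 in \cite{skipfree}, already invoked in Section~\ref{sec2}) identifies $\rho$ as the smallest nonnegative solution of $\gamma(x) = 1$, which rearranges to equation \eqref{fp1}.

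The three cases can then be disposed of directly. If $p_1 = 0$ the increments are nonnegative, so $S_n$ never drops below its starting value $1$, and $p = 0$ (consistent with $g(0) = 0$). If $p_1 > 0$ and $\mu \leq 2$ the drift $\mu - 2$ is nonpositive, so the nondegenerate one-dimensional walk visits every nonpositive level almost surely and $p = 1$. If $p_1 > 0$ and $\mu > 2$, Lemma~\ref{drift} with $X_i = \xi_i - 2$ gives that the walk started at $0$ stays nonnegative with positive probability, which by left-continuity implies $\rho < 1$; while $\pi_{-1} = p_1 > 0$ immediately gives $\rho > 0$. For uniqueness of the solution in $(0,1)$, consider $\phi(x) := g(x)/x^2$ on $(0,1]$: it satisfies $\phi(x) \to +\infty$ as $x \to 0^+$ (the term $p_1/x$ dominates), $\phi(1) = 1$, and $\phi'(1) = \mu - 2 > 0$. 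The sign of $\phi'(x)$ agrees with that of $\psi(x) := x g'(x) - 2 g(x) = \sum_{k \geq 1}(k-2)\, p_k\, x^k$, which is convex on $[0,1]$ with $\psi(0) = 0$, $\psi'(0) = -p_1 < 0$ and $\psi(1) = \mu - 2 > 0$; convexity forces $\psi$ to vanish at a single interior point. Therefore $\phi$ strictly decreases from $+\infty$ to a unique minimum value (which is strictly less than $1$) and then strictly increases back up to $\phi(1) = 1$, and the equation $\phi(x) = 1$ has exactly one solution in $(0,1)$.

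Finally, the claim $\bar{p} = p^2$ follows from Theorem~\ref{arch}, which in this regime gives $\bar{p} = g(p)$ whenever $p_0 = 0$; combined with \eqref{fp1} this simplifies immediately to $p^2$. The only genuinely technical step in the above plan is the uniqueness argument for the non-trivial case; the rest is a straightforward translation between the game and the skip-free random walk framework already developed.
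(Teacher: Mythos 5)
Your proof is correct, and it rests on the same reduction the paper itself uses: in the regime $I(v)=\emptyset$ the game is equivalent to the left-continuous embedded walk \eqref{rw}, and $p$ is its probability of hitting $0$ from $1$. The differences lie in how the individual pieces are discharged. The paper derives \eqref{fp1} self-containedly by a first-step (strong Markov) decomposition, $p = p_1 + \sum_{k\geq 2} p_k\, p^{k-1} = g(p)/p$, whereas you import the identification of $\rho$ with the smallest root of $\gamma(x)=1$ from Lemma 2 of \cite{skipfree} --- the same fact the paper quotes in its preliminaries, so this is legitimate; note only that rearranging $\gamma(x)=1$ into $g(x)=x^2$ introduces the spurious root $x=0$ (where $\gamma$ is undefined when $p_1>0$), so ``smallest nonnegative solution'' of \eqref{fp1} is not literally $p$, and it is your subsequent case analysis that repairs this. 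Your uniqueness argument via $\phi(x)=g(x)/x^2$ and the convex function $\psi(x)=xg'(x)-2g(x)$ is the paper's argument in different clothing (the paper works with $h(x)=g(x)-x^2$ and the convexity of $g'$); the two are equivalent. The one point of genuine divergence is $\bar{p}$: the paper obtains $\bar{p}=p^2$ directly from left-continuity (the walk started at $2$ must pass through $1$ before reaching $0$, so the strong Markov property gives $p\cdot p$), while you invoke the equality case of Theorem \ref{arch}. That is defensible, since the equality for $p_0=0$, $I(v)=\emptyset$ is argued by a symmetry/optimality consideration in Section \ref{sec3}; but the paper there defers ``further details'' to precisely the results of this section, so the self-contained random-walk derivation is the cleaner route and avoids any appearance of circularity. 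Aside from these cosmetic points, your argument is sound.
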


\begin{proof}
	Note that the condition $p_0=0$ makes the embedded random walk $(S_n)_{n\in\IN_0}$ described in \eqref{rw} left-continuous, i.e.\ its down steps are of size at most 1. Hence, we can write the probability $p=\Prob(S_n=0 \text{ for some }n\in\IN\,|\,S_0=1)$ by conditioning on the first step as
	\begin{align*}p&=p_1+\sum_{k=0}^\infty \Prob(\xi_1-2=k)\cdot\Prob(S_n=0 \text{ for some }n>1\,|\, S_1=k+1)\\
				   &=p_1+\sum_{k=2}^\infty p_k\cdot p^{k-1}=\frac{g(p)}{p},
	\end{align*}
	where the strong Markov property was used in the second step and $p\neq0=p_0$ in the last one. Since $g(0)=p_0=0$, both $x=0$ and $x=1$ are general solutions to \eqref{fp1}. If $p_1=0$, the random walk $(S_n)_{n\in\IN_0}$ is non-decreasing and trivially Maker wins with probability 1. In case $p_1>0$, the walk $(S_n)_{n\in\IN_0}$ is not constant. If further $\mu\leq 2$, it does not have positive drift and will therefore visit state $0$ almost surely, i.e.\ $p=1$. For the case $p_1>0,\ \mu>2$, let us consider the function $h(x):=g(x)-x^2$.
	The sign of its derivative $h'$ can change at most twice, due to the convexity of $g'$. Since $h(0)=h(1)=0$ and
	$h'(0)=p_1>0,\ h'(1)=\mu-2>0$, it has exactly one zero in $(0,1)$. That in this case $(S_n)_{n\in\IN_0}$ is transient with positive drift (and therefore $0<p<1$, cf.\ Lemma \ref{drift}), completes the proof of the first part.
	If Maker starts the game, which translates to $S_0=2$, we get by left-continuity of the walk \begin{align*}
		\bar{p}&=\Prob(S_n=0 \text{ for some }n\in\IN\,|\,S_0=2)\\
		&=\Prob(S_n=0 \text{ for some }n\in\IN\,|\,S_0=1)^2=p^2=g(p),
		\end{align*}
	which settles the second part of the claim.
\end{proof}
%

Noting that for $\xi\sim\mathrm{Geo}_\IN(s)$ it holds $p_0=0$ and $g(x)=\frac{sx}{1-(1-s)x}$, the result
from Proposition \ref{emptygeo} can easily be recovered using the above theorem.

\begin{example}\label{oom}
 Let us consider the Maker-Breaker game, played on a GW-tree with what was colloquially called ``one-or-many'' offspring distribution in the last section of \cite{Dek2}, i.e.\ $p_n=1-p_1=r$ for some $r\in(0,1)$ and $n\geq 2$. Then the tree is infinite and $g(x)=(1-r)\,x+rx^n$. For $r>\frac{1}{n-1}$, the game is non-trivial ($\mu>2$) and the winning probability of Breaker when starting the game is the (unique) $x\in(0,1)$ solving $x=(1-r)+rx^{n-1}$. In order to make the example a little less abstract, for the choice of e.g.\ $n=4$ this becomes 
 \[p=\frac{\sqrt{r\,(4-3r)}-r}{2 r}\quad\text{and}\quad\bar{p}=\frac{2-r-\sqrt{r\,(4-3r)}}{2r},\]
 illustrated in  Figure \ref{Oom} below.

\begin{figure}[H]
	\hspace{-0.3cm}
	\includegraphics[scale=1]{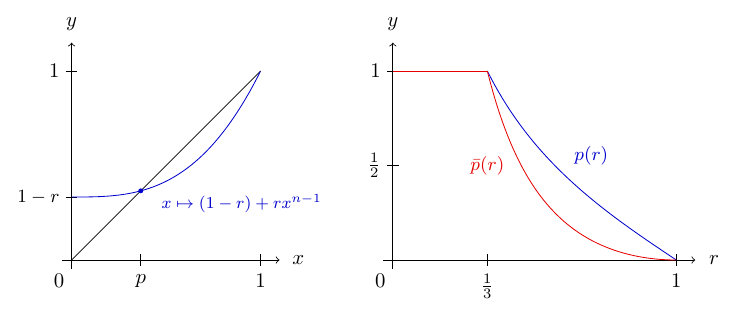}
	\caption{\textbf{Left:} For $r(n-1)>1$, the function $x\mapsto(1-r)+rx^{n-1}$ intersects exactly once with the identity function on (0,1). \textbf{Right:} Winning probabilities for Breaker on a tree with one-or-many offspring distribution, where ``many'' refers to $n=4$.\label{Oom}}
\end{figure}
\end{example}

To extend the analysis beyond the scope of Theorem \ref{i=0}, i.e.\ offspring distributions with $p_0>0$, we first
verify that $\bar{p}>g(p)$ in this case.

\begin{proposition}\label{ineq}
	Consider $\xi$ with $p_0>0$. Then the probability $\bar{p}$ of Breaker winning the game Maker starts on a GW-tree $T$ arising from offspring distribution $\xi$, when the nodes carry no further information is strictly bigger than $g(p)$ in the non-trivial case (i.e.\ $p<1$).
\end{proposition}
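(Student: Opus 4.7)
The plan is to exhibit for Breaker a concrete strategy whose winning probability strictly exceeds the lower bound $g(p)$ from Theorem \ref{arch}. Recall that the pass strategy achieving $\bar{p}\geq g(p)$ has Breaker treat each edge $\langle\mathbf{0},v\rangle$ Maker fixates as the opening of an independent sub-game on $T_v$ (with Breaker to move), won with probability $p$ in each sub-game. My starting observation is that in the regime $I(v)=\emptyset$ the children of a newly internal node become externally visible at once, so right after Maker's move Breaker learns whether the freshly connected child is a leaf, and can exploit this.

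I would then analyze the following minor modification of the pass strategy: at Maker's very first move, say fixating $\langle\mathbf{0},v_1\rangle$, if $v_1$ turns out to be a leaf and $\xi_{\mathbf{0}}\geq 2$, Breaker cuts $\langle\mathbf{0},v_2\rangle$ for another child $v_2$ of the root (rather than trying to open an empty sub-game on $T_{v_1}$); in every other situation and for the remainder of the game, Breaker just follows the pass strategy. Writing $p_{\geq 1}=(p-p_0)/(1-p_0)$ for the conditional winning probability of the original game given $\xi_{\mathbf{0}}\geq 1$ (so that $p=p_0+(1-p_0)p_{\geq 1}$), conditioning on $\xi_{\mathbf{0}}=k$ and splitting on whether $v_1$ is a leaf gives the winning probability
\[q_k \;=\; p_0\,p^{k-2} + (1-p_0)\,p_{\geq 1}\,p^{k-1} \;=\; p^k + p_0(1-p)\,p^{k-2}\quad\text{for }k\geq 2,\]
and simply $q_k=p^k$ for $k\leq 1$. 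The first contribution corresponds to $v_1$ being a leaf, with both $v_1,v_2$ then settled in Breaker's favor and the remaining $k-2$ unconditioned sub-trees yielding factor $p$ each; the second to $v_1$ being non-leaf, with the (now conditioned) sub-game $T_{v_1}$ contributing $p_{\geq 1}$ and the other $k-1$ unconditioned sub-games contributing $p$ each. Since $p\geq p_0>0$ (a childless root yields a trivial Breaker win) and $p<1$, I read off $q_k>p^k$ for every $k\geq 2$.

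The step I expect to require the most care is making sure this per-$k$ improvement integrates to a strict inequality, rather than being vacuous in case the offspring distribution is concentrated on $\{0,1\}$. For that, I would observe that $p<1$ in this regime forces the embedded walk in \eqref{rw} to have positive drift, i.e.\ $\mu>2$ (cf.\ Lemma \ref{drift}), so combined with $p_0>0$ it is impossible to have $p_k=0$ for all $k\geq 2$. Averaging then gives
\[\bar{p}\;\geq\;\sum_{k\geq 0}p_k\,q_k \;=\; g(p) + p_0(1-p)\sum_{k\geq 2}p_k\,p^{k-2} \;>\; g(p),\]
as required.
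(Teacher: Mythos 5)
Your proof is correct, and it exploits the same underlying mechanism as the paper's: whenever Maker spends a move fixating an edge to a leaf, Breaker's next move is necessarily spent cutting an edge at the root, erasing an entire untouched first-generation subtree for free. The difference lies in how much of this effect is cashed in. The paper follows the depth-first play through the whole game: it introduces the embedded walk \eqref{rw} started at $2$, the overshoot event $\{S_{\tau_{\leq 0}}=-1\}$ with probability $\alpha$, and accounts for \emph{every} occurrence of the leaf effect, arriving at a closed-form expression $p\big(\alpha+(1-\alpha)p\big)^{k-1}>p^k$ for Breaker's conditional winning probability given $Z_1=k$. You instead exploit the effect only at Maker's very first move and bound the entire continuation by the crude pass-strategy estimate underlying Theorem \ref{arch} (a factor $p$ per untouched subtree, $p_{\geq 1}$ for the conditioned one), which yields the weaker bound $q_k=p^k+p_0(1-p)\,p^{k-2}$ --- weaker, but entirely sufficient. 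What the paper's route buys is a quantitative formula for $\bar{p}$ in terms of $\alpha$; what yours buys is simplicity and robustness: since you only ever need lower bounds, you sidestep the delicate sequential bookkeeping (whether a given subtree gets erased depends on how the \emph{preceding} sub-game ended, so the number of sub-games actually played is not literally binomial, a subtlety the paper's $\mathrm{Bin}(k-1,1-\alpha)$ modelling passes over), and nothing in your argument depends on such exactness. Two small imprecisions, neither of which affects correctness: your displayed $q_k$ is really only a lower bound on the value of your strategy, since the continuation also profits from later leaf events, so those equalities should be ``$\geq$'' (harmless, as you only use $\bar{p}\geq\sum_k p_k q_k$); and Lemma \ref{drift} gives the implication ``positive drift $\Rightarrow$ positive survival probability'', whereas you invoke the converse --- which is equally standard (a non-degenerate walk with $\mu\leq 2$ hits $\IZ\setminus\IN$ almost surely), or can be avoided altogether by noting that $p_k=0$ for all $k\geq 2$ would make $T$ almost surely a finite path and hence $p=1$.
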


\begin{proof}
	The difference to the case $p_0=0$ examined in Theorem \ref{arch} is that Maker might conclude the exploration of a subtree rooted at a vertex in the first generation by fixating an edge incident to a leaf in the tree. Then Breaker will consequently erase an edge incident to the root next. To be more precise, consider the embedded random walk $(S_n)_{n\in\IN_0}$ with $S_0=2$ and increments distributed as $\xi-2$ and let $\tau_{\leq0}=\inf\{n\in\IN,\,S_n\leq0\}$
	be the first time the walk exits $\IN$ (possibly $\tau_{\leq0}=\infty$). In contrast to left-continuous random walk,
	$S_{\tau_{\leq0}}=-1$ has positive probability when $p_0>0$, so let $\alpha=\Prob(S_{\tau_{\leq0}}=-1)$.
	
	Following a depth-first strategy is optimal for both players, by symmetry in the external nodes. Given the root $\mathbf{0}$ has $k\geq1$ children, this will amount to $1+Y$ consecutive games on independent trees distributed like $T$ in which Breaker starts, where $Y\sim\mathrm{Bin}(k-1,1-\alpha)$: Whenever Maker makes the last possible move in a subtree rooted in the first generation, which corresponds to the event $\{S_{\tau_{\leq0}}=-1\}$, Breaker erases the next one.
	So the probability for a win of Breaker in the game started by Maker is given by
	\[\Prob(\text{Breaker wins}\,|\,\text{Maker starts, }Z_1=k)=\IE(p^{Y+1})=p\big(\alpha+(1-\alpha)p\big)^{k-1}.\]
	If $p_0=0$ (hence $\alpha=0$), this term equals $p^k$. Given $p<1$ and $p_0>0$, however, it is strictly bigger than $p^k$ for $k>1$. This entails
	\[\bar{p}=p_0+\sum_{k=1}^\infty p_k\cdot p\big(\alpha+(1-\alpha)p\big)^{k-1}>\sum_{k=0}^\infty p_k\,p^k=g(p),\]
	where $p_0+p_1<1$ (implied by $p<1$) was used.
\end{proof}

\subsection{Separable offspring distributions}\label{separable}
 Let us now consider the case in which $\xi$ can be written as the sum of two i.i.d.\ integer-valued random variables:
\begin{definition}
	Let us call the distribution of $\xi$ {\em separable}, if there exist independent and identically distributed integer-valued random variables $X_1$ and $X_2$, such that 
	\begin{equation*}\label{sep}
		\xi\stackrel{d}{=}X_1+X_2.
	\end{equation*}
\end{definition}

At first glance, this might seem to be a fairly special case. However, both the Poisson distribution $\mathrm{Poi}(\lambda)$ and the negative binomial distribution $\mathrm{NB}(r,s)$, generalized to $r$ not necessarily being integer (occasionally referred to as {\em Pólya distribution}), hence also the geometric distribution $\mathrm{Geo}_{\IN_0}(s)$, are even infinitely divisible (i.e.\ can be written as the sum of arbitrarily many i.i.d.\ random variables). Since the corresponding summands are integer-valued,
\[\mathrm{Poi}(\lambda)=\mathrm{Poi}(\tfrac\lambda2)\ast\mathrm{Poi}(\tfrac\lambda2)\quad\text{resp.}
\quad\mathrm{NB}(r,s)=\mathrm{NB}(\tfrac r2,s)\ast\mathrm{NB}(\tfrac r2,s),\]
these distributions are also separable in the sense above.
The binomial distribution $\mathrm{Bin}(n,r)$ with an even number $n=2k$ of trials is trivially separable, as $\mathrm{Bin}(k,r)\,\ast \,\mathrm{Bin}(k,r)=\mathrm{Bin}(2k,r)$, but not infinitely divisible. For odd $n$, $\mathrm{Bin}(n,r)$ can not even be the convolution of two i.i.d.\ random variables: Assuming for contradiction that it was, the (independent) summands $X_1,X_2$ would need to attain the values $0$ and $\frac n2$ with positive probability (so that both $0$ and $n$ can be attained by their sum, but neither negative numbers nor values bigger than $n$). Since $\frac n2$ in this case is not an integer, but $\Prob(X_1+X_2=\frac n2)>0$, we can conclude that $X_1+X_2\nsim\mathrm{Bin}(n,r)$, a contradiction.

Important to observe is that if $\xi$ is separable, we get $\xi-2\stackrel{d}{=}(X_1-1)+(X_2-1)$ and can consider
an extended walk $(\tilde{S}_n)_{n\in\IN_0}$ with increments distributed as $X_1-1$ and $\tilde{S}_0=S_0$, essentially splitting the steps of the embedded random walk in two. Then the probability of the embedded walk to eventually hit $0$ (or $-1$) can be calculated as $\Prob(\tilde{S}_{2n}\leq0\text{ for some }n\in\IN)$ using the results collected in Lemma \ref{calc}, as was done in \cite{LCRW}. In this way, using Cor.\ 4.5 in \cite{LCRW}, we can calculate $p$ (and $\bar{p}$) for the Maker-Breaker game in the regime $I(v)=\emptyset$ on GW-trees based on most standard offspring distributions, with the exception of $\xi\sim \mathrm{Bin}(n,r),$ where $n$ is odd.

\begin{theorem}\label{septhm}
 If the offspring distribution is separable, i.e.\ $\xi\stackrel{d}{=}X_1+X_2$ with i.i.d.\ $(X_1,X_2)$, and $\IE\xi>2$ then the probabilities of Breaker winning on the corresponding GW-tree with the nodes carrying no further information is given by
 \begin{equation}\label{sep1}
 	p=\rho\,\bigg(1-\frac{\sigma\cdot\rho_\mathrm{odd}}{1-\theta\cdot(1-\theta_\mathrm{odd})}\bigg)
 \end{equation}
and
\begin{equation}\label{sep2}
	\bar{p}=\rho^2\,\bigg(1-\frac{2\sigma\cdot\rho_\mathrm{odd}\,(1-\rho_\mathrm{odd})}{1-\theta\cdot(1-\theta_\mathrm{odd})}\bigg)
\end{equation}
depending on whether Breaker or Maker starts the game.

 The (conditional) probabilities $\rho$, $\sigma$ and $\theta$ as well as $\rho_\mathrm{odd}$ and $\theta_\mathrm{odd}$  appearing here refer to the ones introduced in \eqref{probs}, corresponding to the left-continuous random walk on $\IZ$ starting at $0$ with i.i.d.\ increments distributed as $X_1-1$.
\end{theorem}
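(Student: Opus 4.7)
The plan is to reduce both $p$ and $\bar{p}$ to hitting-time probabilities of a single left-continuous random walk and then apply Lemma \ref{calc}. Exploiting separability I write $\xi-2\stackrel{d}{=}(X_1-1)+(X_2-1)$ and let $(\tilde{S}_n)_{n\in\IN_0}$ be the walk with $\tilde{S}_0=0$ and i.i.d.\ increments distributed as $X_1-1$. Since $\xi\geq 0$ a.s.\ forces $X_1\geq 0$ a.s., this walk is left-continuous, and $\IE(X_1-1)=\tfrac12\IE\xi-1>0$ guarantees positive drift, so all quantities of Lemma \ref{calc} are well-defined (in the non-degenerate case $\pi_{-1}>0$; otherwise the embedded walk is non-decreasing and $p=0$ trivially). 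Coupling the embedded walk $(S_n)$ from \eqref{rw} with $(\tilde{S}_{2n}+S_0)_{n\in\IN_0}$, Breaker's winning event translates to $\{\exists n\geq 1:\tilde{S}_{2n}\leq -1\}$ when starting and to $\{\exists n\geq 1:\tilde{S}_{2n}\leq -2\}$ when moving second.

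By left-continuity any visit to $\{\leq-1\}$ passes through $-1$ at time $\tau_{-1}$, so conditioning on the parity of this finite hitting time I obtain
\begin{equation*}
   p\,=\,\rho\,(1-\rho_{\mathrm{odd}})\,+\,\rho\,\rho_{\mathrm{odd}}\cdot q_1,\qquad q_1:=\Prob\bigl(\exists n\geq 1\text{ odd}:\tilde{S}_n\leq 0\,\bigm|\,\tilde{S}_0=0\bigr),
\end{equation*}
because the strong Markov restart at $\tau_{-1}$ combined with spatial translation turns the remaining task into visiting $\{\leq 0\}$ at an odd relative time. Verifying the identity $1-q_1=\sigma/\bigl(1-\theta\,(1-\theta_{\mathrm{odd}})\bigr)$ then delivers \eqref{sep1}.

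Computing $q_1$ is the main technical step, and I would handle it by a renewal argument at the first visit to $\{\leq 0\}$. Starting from $0$, the walk either steps down to $-1$ at time $1$ with probability $\pi_{-1}$ (any other route to $-1$ would have to recross $0$ first by left-continuity), or first returns to $0$ with probability $\theta$ (odd with conditional probability $\theta_{\mathrm{odd}}$), or stays strictly positive forever with probability $\sigma$. The first two cases contribute to $q_1$ precisely when the parity is odd, whereas an even first return to $0$ restarts the problem via the strong Markov property, yielding
\begin{equation*}
   q_1\,=\,\pi_{-1}\,+\,\theta\,\theta_{\mathrm{odd}}\,+\,\theta\,(1-\theta_{\mathrm{odd}})\,q_1.
\end{equation*}
Solving and using $\pi_{-1}+\theta+\sigma=1$ from Lemma \ref{calc}(a) to replace $1-\theta-\pi_{-1}$ by $\sigma$ gives the advertised expression.

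For $\bar{p}$, left-continuity together with strong Markov at $\tau_{-1}$ yields $\tau_{-2}\stackrel{d}{=}\tau_{-1}+\tau_{-1}'$ with $\tau_{-1}'$ an independent copy of $\tau_{-1}$, so $\tau_{-2}$ is finite with probability $\rho^2$, even with conditional probability $(1-\rho_{\mathrm{odd}})^2+\rho_{\mathrm{odd}}^2$, and odd with conditional probability $2\rho_{\mathrm{odd}}(1-\rho_{\mathrm{odd}})$. Repeating the parity decomposition (the odd case again contributing a factor $q_1$ via strong Markov at $\tau_{-2}$ and translation by $-2$) and simplifying via $(1-\rho_{\mathrm{odd}})^2+\rho_{\mathrm{odd}}^2=1-2\rho_{\mathrm{odd}}(1-\rho_{\mathrm{odd}})$ produces \eqref{sep2}. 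The trickiest bookkeeping will be in the $q_1$ derivation, where one must cleanly exploit that from $0$ reaching $-1$ without first revisiting $0$ is only possible in a single downward step, which is exactly what lets $\pi_{-1}$, $\theta$ and $\sigma$ partition the possible futures.
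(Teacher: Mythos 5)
Your proof is correct, and the formulas you obtain agree exactly with \eqref{sep1} and \eqref{sep2}, but your route is genuinely different from the paper's. The paper's proof is essentially a citation: after the reduction $\xi-2\stackrel{d}{=}(X_1-1)+(X_2-1)$ (set up just before the theorem), it invokes Cor.~4.5 of \cite{LCRW}, noting only that the walk there is shifted down by one, so that $p$ and $\bar p$ are the quantities computed there for starting heights $k=0$ and $k=1$. You instead re-derive the content of that corollary from first principles, using nothing beyond the definitions in \eqref{probs} and Lemma \ref{calc}(a). The key steps in your argument all check out: left-continuity of $(\tilde S_n)$ holds since $\Prob(X_1=j)>0$ for some $j<0$ would force $\Prob(\xi=2j)\geq\Prob(X_1=j)^2>0$; by left-continuity the first entry into the set of states $\leq-1$ (resp.\ $\leq-2$) occurs at $-1$ at time $\tau_{-1}$ (resp.\ at $-2$ at time $\tau_{-2}\stackrel{d}{=}\tau_{-1}+\tau_{-1}'$ with i.i.d.\ summands, by the strong Markov property), so conditioning on the parity of that hitting time reduces both $p$ and $\bar p$ to the single quantity $q_1$; and your renewal equation $q_1=\pi_{-1}+\theta\,\theta_{\mathrm{odd}}+\theta\,(1-\theta_{\mathrm{odd}})\,q_1$ is valid because the three alternatives (immediate step to $-1$; return to $0$ before hitting $-1$; staying strictly positive forever) are disjoint and exhaustive --- which is precisely Lemma \ref{calc}(a) --- and on the middle event the first visit to the set of states $\leq 0$ is the return to $0$ itself, again by left-continuity. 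Solving gives $1-q_1=\sigma/\bigl(1-\theta\,(1-\theta_{\mathrm{odd}})\bigr)$, from which \eqref{sep1} and \eqref{sep2} follow by your parity bookkeeping. What each approach buys: the paper's proof is three lines but leans entirely on an external result, while yours makes Theorem \ref{septhm} self-contained within this paper (only part (a) of Lemma \ref{calc} is used; parts (b)--(d) are needed merely to evaluate the probabilities in concrete examples), at the cost of redoing the parity analysis that \cite{LCRW} was written to supply. A further small merit of your write-up is the explicit treatment of the degenerate case $\pi_{-1}=\Prob(X_1=0)=0$, where the embedded walk is non-decreasing and $p=\bar p=0$ trivially; the paper leaves this implicit, and note that in that case the conditional quantities $\rho_{\mathrm{odd}}$ and $\theta_{\mathrm{odd}}$ in \eqref{probs} are not even well-defined.
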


\begin{proof}
	Cor.\ 4.5 in \cite{LCRW} applies almost perfectly to the embedded walk $(S_n)_{n\in\IN_0}$ here, just the walk there is shifted downwards by one (hitting $\IZ\setminus\IN_0$ instead of $\IZ\setminus\IN$). Thus, our winning probabilities for Breaker, $p$ and $\bar{p}$, correspond to a start in $k=0$ and $k=1$ respectively there.
\end{proof}

\begin{example}[Poisson distribution]
	Consider the Maker-Breaker game, played on a GW-tree based on a Poisson offspring distribution $\mathrm{Poi}(\lambda)$. For $\lambda> 2$, the game is non-trivial and the winning probability of Breaker in the regime $I(v)=\emptyset$ are given by \eqref{sep1} and \eqref{sep2} respectively. Choosing a numerical value for the parameter, e.g.\ $\lambda=3$, we get $X_1\sim\mathrm{Poi}(\frac32)$, hence approximately $\rho=0.417188$, $\sigma=0.311713,$ $\theta=0.465157$, $\rho_\mathrm{odd}=0.706513$ and $\theta_\mathrm{odd}=0.817032$, which result in $p=0.31699$ and $\bar{p}=0.14967>0.12886=g(p)$. The corresponding probabilities for small values of $\lambda$ are depicted in Figure \ref{Poi3}. 
	
	It is worth mentioning, that the phase transition in this case is continuous as opposed to the regime $I(v)=T_v$.
\begin{figure}[H]
	\centering
	\includegraphics[scale=1]{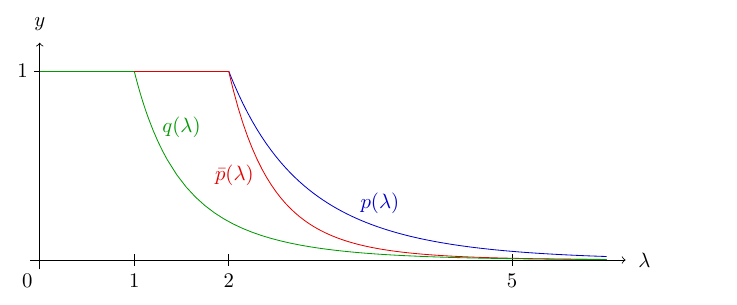}
	\caption{Winning probabilities for Breaker on a GW-tree, which arises from a Poisson offspring distribution, $\xi\sim\mathrm{Poi}(\lambda)$. \label{Poi3}}
\end{figure}
	
\end{example}

\begin{example}[Geometric distribution]
	If the game is played on a GW-tree based on a $\mathrm{Geo}_{\IN_0}(s)$ offspring distribution instead, it is non-trivial for $s<\frac13$ and we get $X_1\sim\mathrm{NB}(\frac12,s)$. As before we can calculate the corresponding (conditional) probabilities for the extended left-continuous walk using Lemma \ref{calc} after selecting a numerical value for the parameter.
	The choice $s=\frac14$ for example gives: \begin{equation*}\begin{split}\rho=\tfrac16\,(1+\sqrt{13}),\quad \sigma=\tfrac14\,(\sqrt{13}-3),\quad  \theta=\tfrac14\,(5-\sqrt{13}),\\ \rho_\mathrm{odd}=\tfrac{1}{12}\,(13-\sqrt{13})\quad \text{and} \quad \theta_\mathrm{odd}=\tfrac{1}{12}\,(5+\sqrt{13}),\end{split}
	\end{equation*} 
    which leads to $p=\frac23$ and $\bar{p}=\frac59>\frac12=g(p)$ in this case.
\end{example}

\subsection{Bounds by coupling}
The monotonicity in the underlying graph also allows us to derive bounds for the winning probabilities of Breaker by means of comparison with the game on other GW-trees with (preferably) more tractable offspring distribution.
While the binomial distribution $\mathrm{Bin}(n,r),$ where $n$ is odd, is not separable, hence eludes our analysis in the foregoing subsection, it holds
\[\mathrm{Bin}(n-1,r)\preceq_{st}\mathrm{Bin}(n,r)\preceq_{st}\mathrm{Bin}(n+1,r).\]
Hence the probabilities for Breaker to win on the corresponding trees have the reversed order (as switching to a subgraph favors Breaker).
\begin{example}[Binomial distribution]
 To get bounds for $p$ and $\bar{p}$ relating to the Maker-Breaker game on the GW-tree with offspring distribution
 $\xi\sim\mathrm{Bin}(13,\frac14)$ for instance, one can calculate the corresponding probabilities for 
 $\xi\sim\mathrm{Bin}(12,\frac14)$ and $\xi\sim\mathrm{Bin}(14,\frac14)$ respectively via Theorem \ref{septhm} in order to conclude $0.2482>p>0.1367$ and $0.0957>\bar{p}>0.0383$. Clearly, in this case the bounds are not particularly sharp, but for the Binomial distribution the winning probabilities for Breaker get a lot smaller, hence closer to each other as $n$ increases. As points of reference, $\bar{p}=p=1$ for offspring distributions $\mathrm{Bin}(n,\frac14)$ with $n\leq8$ (since then $\mu\leq2$), however approximately $p=0.078$ and $\bar{p}=0.017$ for $\xi\sim\mathrm{Bin}(16,\frac14)$.
\end{example}

\section{Total number of progeny given, \texorpdfstring{$I(v)=|T_v|$}{I(v)=|T\_v|}}

If only the {\em size} of the subtree rooted in vertex $v$ is revealed, once $v$ is an internal or external node, and not the whole structure of the tree (as in Section \ref{sec:all}), the situation is very much like when no further information is given. In fact, for extinction probability $q=0$ both regimes coincide (see part (b) of Lemma \ref{skewed2} and the remark after Theorem \ref{tv=i}).
While both players stay away from any edge towards a node that is the root of a finite subtree, there is complete symmetry in the nodes that are the root of an infinite subtree, i.e.\ vertices $v$ with $|T_v|=\infty$.

Consequently, the number of edges towards external nodes considered for play when it is Maker's turn is again a random walk on $\IZ$. The step
size, however, is now $\xi'-2$, where $\xi'$ is the offspring distribution skewed by the fact that vertices, which are the roots of finite subtrees, are disregarded:

\begin{lemma}\label{skewed}
	In the case $I(v)=|T_v|$ and given the non-trivial situation, where $|T_\mathbf{0}|=\infty$, the number of external nodes with infinite progeny at Maker's turns can be described as the random walk
	\[S_0=c,\quad S_{n}=S_{n-1}+(\xi'_n-2)\quad\text{for}\ n\in\IN,\]
	where $(\xi'_n)_{n\in\IN}$ is an i.i.d.\ sequence with marginal probability mass function
	\[\Prob(\xi_1'=k)=\frac{1}{1-q}\, \sum_{n\geq k} p_n \binom{n}{k}(1-q)^kq^{n-k}\quad\text{for }k\in\IN,\]
	where $(p_k)_{k\in\IN_0}$ is again the pmf of the offspring distribution corresponding to the underlying supercritical GW-process. As before, $c=1$ corresponds to Breaker starting, $c=2$ corresponds to Maker starting the game.
\end{lemma}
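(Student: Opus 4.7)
The plan is to reduce the game in the regime $I(v)=|T_v|$ to the no-information game ($I(v)=\emptyset$) played on the infinite skeleton $T'=\{v\in T:|T_v|=\infty\}$, and then to identify the skeleton's offspring distribution with $\xi'$. This reuses most of the reasoning from Section \ref{empty}; only the computation of the pmf of $\xi'$ is genuinely new.

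First I would argue that optimal play is concentrated on edges of $T'$. Any edge toward a vertex $v$ with $|T_v|<\infty$ cannot lie on an infinite path from $\mathbf{0}$, so fixating it is strictly suboptimal for Maker; since Maker will never touch such edges, removing them is a wasted move for Breaker as well. The event $\{\mathbf{0}\text{ lies in a finite component}\}$ is therefore invariant under the resolution of non-skeleton edges, and the game reduces to the $(1,1)$-game on $T'$. On $T'$, every revealed node trivially satisfies $|T_v|=\infty$, so the visible information becomes void and the induced game coincides with the $I(v)=\emptyset$ game on $T'$.

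The next step is to compute the law of $T'$ via the classical thinning decomposition of a supercritical GW-tree. Each child of a vertex $v$ independently belongs to $T'$ with probability $1-q$; conditionally on $\xi_v=n$ (probability $p_n$), the number of skeleton-children is $\mathrm{Bin}(n,1-q)$. Since $v\in T'$ iff at least one of its children lies in $T'$, an event of probability $1-q$, conditioning on $v\in T'$ and summing over $n\geq k$ yields
\[\Prob(\xi'=k)=\frac{1}{1-q}\sum_{n\geq k}p_n\binom{n}{k}(1-q)^k q^{n-k}\]
for $k\in\IN$, supported on the positive integers as required. Independence across skeleton vertices is inherited from the iid structure of the subtrees rooted at distinct children in $T$, so that $(\xi'_n)_{n\in\IN}$ is iid with the above marginal.

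Finally, the embedded random walk description $S_n=S_{n-1}+(\xi_n'-2)$ with $c=1$ (resp.\ $c=2$) for Breaker (resp.\ Maker) starting follows line by line from the derivation of \eqref{rw} in Section \ref{empty}, applied to $T'$ and $\xi'$ in place of $T$ and $\xi$. The main obstacle is making the reduction to the skeleton rigorous: one has to justify that non-skeleton edges really can be ignored. This rests on the invariance argument above, combined with the observation that any infinite path from $\mathbf{0}$ is by definition contained in $T'$, so Breaker's success is determined solely by the play on skeleton edges.
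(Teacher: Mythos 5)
Your proposal is correct and takes essentially the same approach as the paper: the paper likewise reduces to the random-walk reasoning of Section \ref{empty} (both players play only edges toward external nodes with infinite progeny, which are exchangeable) and derives the pmf of $\xi'$ by exactly your computation, viewing the number of infinite-progeny children as an independent binomial thinning of $\xi$ with retention probability $1-q$, conditioned on the event $\{|T_v|=\infty\}=\{\xi^*_v>0\}$ that at least one child is retained. The only cosmetic difference is that you phrase the reduction as an $I(v)=\emptyset$ game on the skeleton tree $T'$, whereas the paper directly tracks the count of infinite-progeny external nodes as the walk.
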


\begin{proof}
	The only thing to prove (besides following the reasoning in Section \ref{empty} verbatim), is the form of
	the step-size distribution claimed here. Let us write $\xi^*_v$ for the number of individuals having infinite progeny among the offspring of $v$ and note that this is just an independent thinning of $\xi_v$ with factor $q$. Since $\{|T_v|=\infty\}=\{\xi^*_v>0\}$, we get $\Prob\big(\xi^*_v=0\,\big|\,|T_v|=\infty\big)=0$ and for any $k\in\IN$
	\begin{align*}
		\Prob\big(\xi^*_v=k\,\big|\,|T_v|=\infty\big)&=\frac{\Prob(\xi^*_v=k)}{\Prob(|T_v|=\infty)}\\
		&=\frac{1}{1-q}\,\sum_{n=1}^\infty p_n\cdot\Prob(\xi^*_v=k\,|\,\xi_v=n),
	\end{align*}
	which implies the claim, writing $\xi'$ for $\xi^*$ conditioned on $\{\xi^*>0\}$.
\end{proof}

When it comes to this skewed step size distribution $\xi'$, two basic facts are fairly easy to verify:
\begin{lemma}\label{skewed2}
	Let $(p_k)_{k\in\IN_0}$ be the pmf of $\xi$ and for some $0\leq q<1$ the pmf of $\xi'$ be given by
	\[\Prob(\xi'=k)=\frac{1}{1-q}\, \sum_{n\geq k} p_n \binom{n}{k}(1-q)^kq^{n-k}\quad\text{for }k\in\IN.\]
	Then the following holds:
	\begin{enumerate}[(a)]
		\item $\IE\xi'=\IE\xi=\mu$
		\item $\xi'\stackrel{d}{=}\xi$ in case $q=0$. 
	\end{enumerate}
\end{lemma}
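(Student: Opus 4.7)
For part (a), the plan is a direct computation via interchange of summation. Starting from
\[\IE\xi' = \sum_{k=1}^\infty k\cdot\frac{1}{1-q}\sum_{n\geq k} p_n\binom{n}{k}(1-q)^k q^{n-k},\]
I would swap the order of the two sums (justified by non-negativity of all terms, i.e.\ Tonelli) to obtain
\[\IE\xi' = \frac{1}{1-q}\sum_{n=1}^\infty p_n \sum_{k=1}^n k\binom{n}{k}(1-q)^k q^{n-k}.\]
The inner sum is exactly the first moment of a $\mathrm{Bin}(n,1-q)$ random variable, hence equals $n(1-q)$. The two factors $1-q$ then cancel, leaving $\sum_{n=1}^\infty n\,p_n = \mu$. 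Intuitively, this reflects that conditioning the independently thinned variable $\xi^*$ on being positive leaves the mean unchanged: the thinning factor $1-q$ for the unconditional mean and the normalizing factor $1/(1-q)$ from the conditioning cancel exactly.

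For part (b), the key observation is that with the convention $0^0=1$ and $0^m=0$ for $m\geq 1$, the sum defining $\Prob(\xi'=k)$ collapses to the single term $n=k$ when $q=0$, yielding $\Prob(\xi'=k)=p_k$ for every $k\in\IN$. Moreover, $q=0$ forces $p_0=0$ (since the extinction probability always satisfies $q\geq p_0$), so $\xi$ is itself supported on $\IN$. Hence $\xi'\stackrel{d}{=}\xi$.

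I do not foresee any serious obstacle; both parts reduce to elementary manipulations. The only mild subtlety is the index bookkeeping when swapping sums in (a). As a sanity check I would also verify that the proposed pmf sums to one: by the same swap, this reduces to $\sum_n p_n(1-q^n) = 1-q$, i.e.\ $g(q)=q$, which holds by Theorem \ref{extinction} and provides further confidence that the set-up is correct.
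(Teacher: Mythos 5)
Your proposal is correct and follows essentially the same route as the paper: for (a) the paper likewise swaps the order of summation and identifies the inner sum as the mean $n(1-q)$ of a $\mathrm{Bin}(n,1-q)$ variable, and for (b) it simply notes the claim follows by setting $q=0$ in the pmf, which your $0^0$-convention argument makes explicit. Your additional observations (Tonelli justification, the fact that $q=0$ forces $p_0=0$, and the check that the pmf sums to one via $g(q)=q$) are correct refinements of details the paper leaves implicit.
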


\begin{proof}
	While part (b) is both obvious from the construction of $\xi'$ and straightforward to check (by setting $q=0$ in its pmf), the claim that $\xi$ and $\xi'$ have the same mean requires a short calculation:
	\begin{align*}
		\IE\xi'&=\frac{1}{1-q}\,\sum_{k=1}^\infty k \,\sum_{n\geq k} p_n \binom{n}{k}(1-q)^kq^{n-k}\\
		&=\frac{1}{1-q}\,\sum_{n=1}^\infty p_n \,\sum_{k=1}^n k\cdot\binom{n}{k}(1-q)^kq^{n-k}\\
		&=\frac{1}{1-q}\,\sum_{n=1}^\infty p_n\cdot n(1-q) = \mu.\\[-1cm]
	\end{align*}
\end{proof}

For the remainder of this section we can assume $q\neq0$, since otherwise $|T_v|=\infty$ for all vertices $v$, which
puts us back into the regime $I(v)=\emptyset$. But even for $q>0$, in light of Lemma \ref{skewed}, the probability of Breaker winning equals the probability of a random walk on $\IZ$ ever hitting $\IZ\setminus\IN$, much alike the most restrictive information regime $I(v)=\emptyset$ (cf.\ \eqref{rw} above). However, in this regime the stepsize is no longer distributed like $\xi-2$, but by default almost surely bigger or equal to $-1$.

Following the line of argument in Section \ref{empty}, we can therfore use the theory of birth-and-death chains again to solve the most simple cases of offspring distributions, despite the change that every node now carries the number of its total progeny as information. For arbitrary offspring distribution $(p_k)_{k\in\IN_0}$, we write as before $g:[0,1]\to[0,1]$ for its pgf and $q$ for the corresponding extinction probability.

\begin{proposition}\label{0123}
	Let $(p_k)_{k\in\IN_0}$ be a supercritical offspring distribution concentrated on $\{0,1,2,3\}$, i.e.\ $p_1<p_0+p_1+p_2+p_3=1<p_1+2p_2+3p_3$.
	If $g'(q)<p_3\,(1-q)^2$, then the probability of Breaker winning on a GW-tree arising from offspring distribution $\xi$, with $\Prob(\xi=k)=p_k$, given the regime $I(v)=|T_v|$ equals
	\[p=\frac{g'(q)}{p_3\,(1-q)^2}\quad\text{and}\quad \bar{p}=\left(\frac{g'(q)}{p_3\,(1-q)^2}\right)^2\]
	respectively, depending on who starts the game. For $g'(q)\geq p_3\,(1-q)^2$, it holds $\bar{p}=p=1$.
\end{proposition}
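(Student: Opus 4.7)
The plan is to reduce this proposition to Proposition \ref{123} via Lemma \ref{skewed}. Indeed, Lemma \ref{skewed} tells us that in the regime $I(v) = |T_v|$ the game (on the non-trivial event $|T_\mathbf{0}| = \infty$) is governed by a random walk on $\IZ$ with i.i.d.\ increments distributed as $\xi' - 2$, where $\xi'$ is the skewed offspring distribution supported on $\IN$. Since $\xi$ is supported on $\{0, 1, 2, 3\}$, the distribution $\xi'$ is automatically supported on $\{1, 2, 3\}$, placing us precisely in the setting of Proposition \ref{123}. It therefore suffices to identify the pmf values $p_1' := \Prob(\xi' = 1)$ and $p_3' := \Prob(\xi' = 3)$ in terms of the original offspring data.

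This identification is a short calculation from the explicit pmf given in Lemma \ref{skewed}. For $k=1$ the factor $(1-q)$ coming from the binomial cancels with $1/(1-q)$, and the remaining sum is exactly $\sum_{n \geq 1} n\, p_n\, q^{n-1} = g'(q)$. For $k=3$ only the $n=3$ term contributes, and it simplifies to $p_3 (1-q)^2$. Substituting into the formulas of Proposition \ref{123}, the transience criterion $p_1' < p_3'$ translates into exactly $g'(q) < p_3 (1-q)^2$, and the winning probabilities become $p = p_1'/p_3' = g'(q)/(p_3(1-q)^2)$ and $\bar{p} = p^2$, as claimed.

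In the complementary case $g'(q) \geq p_3 (1-q)^2$ one has $p_1' \geq p_3'$, so the embedded birth-and-death chain is either recurrent or drifts leftward; it therefore hits the absorbing state almost surely, yielding $p = \bar{p} = 1$. No real obstacle arises in the argument --- it is essentially bookkeeping built on top of Lemma \ref{skewed} and Proposition \ref{123}. The one point deserving attention is that supercriticality of $\xi$ prevents $p_1' = g'(q)$ from vanishing (this would force $p_1 = p_2 = p_3 = 0$ and hence $\mu = 0$), so the transience/recurrence dichotomy for the birth-and-death chain genuinely applies and delivers the two exhaustive cases of the proposition.
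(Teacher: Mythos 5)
Your proof is correct and follows essentially the same route as the paper: both use Lemma \ref{skewed} to compute the skewed pmf, $\Prob(\xi'=1)=g'(q)$ and $\Prob(\xi'=3)=p_3\,(1-q)^2$, then invoke the birth-and-death chain analysis behind Proposition \ref{123} in the transient case and recurrence/leftward drift (the paper: the equality case plus monotonicity) in the complementary case. One caveat: your closing parenthetical is false --- supercriticality does \emph{not} prevent $g'(q)=0$, since e.g.\ $p_0=p_1=0$, $p_2=p_3=\tfrac12$ gives $q=0$ and $g'(q)=p_1=0$; your implication only holds when $q>0$. This slip is immaterial, however: when $g'(q)=0$ the chain with $\Prob(\xi'=3)>0$ is still transient (indeed non-decreasing), and the formula correctly returns $p=0$, so the dichotomy you need holds without that remark.
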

\begin{proof}
	In order to mimick the argument of Proposition\ \ref{123}, we conclude from Lemma \ref{skewed} that $\xi'$ has pmf
	$\Prob(\xi'=3)=p_3\,(1-q)^2,\ \Prob(\xi'=2)=(p_2+3p_3q)(1-q)$ and $\Prob(\xi'=1)=p_1+2p_2q+3p_3q^2=g'(q)$.
	Note that these probabilities sum to 1, as $q$ equals $g(q)=p_0+p_1q+p_2q^2+p_3q^3$ by Theorem \ref{extinction}.
	The corresponding birth-and-death chain is transient if $\Prob(\xi'=3)>\Prob(\xi'=1)$ and the probability it hits
	0 given by 
 	\[p=\frac{\Prob(\xi'=1)}{\Prob(\xi'=3)}\text{ if started in 1 and by }\bar{p}=\left(\frac{\Prob(\xi'=1)}{\Prob(\xi'=3)}\right)^2\text{ if started in 2}.\]
 	In case $\Prob(\xi'=1)=g'(q)=p_3\,(1-q)^2=\Prob(\xi'=3)$, the random walk described in Lemma \ref{skewed} is a (lazy) simple symmetric random walk on $\IZ$ that will a.s.\ hit state 0, irrespectively of its start value $c$. The statement
 	for $g'(q)< p_3\,(1-q)^2$ then follows immediately by the monotonicity in the tree (or an appropriate coupling of the walks).
\end{proof}

\begin{example}[Binomial distribution]
	Let us consider the Maker-Breaker game on a GW-family tree with offspring distribution $\xi\sim\mathrm{Bin(3,r)}$ and
	$I(v)=|T_v|$. The branching process is supercritical if $r>\frac13$ and its extinction probability then given by the
	smallest positive root of $x=g(x)=(1+r(x-1))^3$, which is
	\[q(r)=1-\frac{3}{2r}+\frac{1}{2r^2}\sqrt{r\,(4-3r)}.\]
	Consequently, $g'(q)=\frac32\,(2-r-\sqrt{r\,(4-3r)})<r^3\,(1-q)^2$ holds when $r>\frac23$. By Proposition \ref{0123}, we can conclude that the probabilities for Breaker winning are given by
	\[p(r)=\begin{cases}1,& \text{for }0\leq r\leq\frac23\\
						\dfrac{6r\,(2-r-\sqrt{r\,(4-3r)})}{\big(3r-\sqrt{r\,(4-3r)}\big)^2},&\text{for } \frac23<r\leq1\end{cases}\]
	and	$\bar{p}(r)=p^2(r)$ respectively, depending on who starts the game. An illustration can be found in Figure \ref{Bin3} below.
	\begin{figure}[H]
		\includegraphics[scale=1]{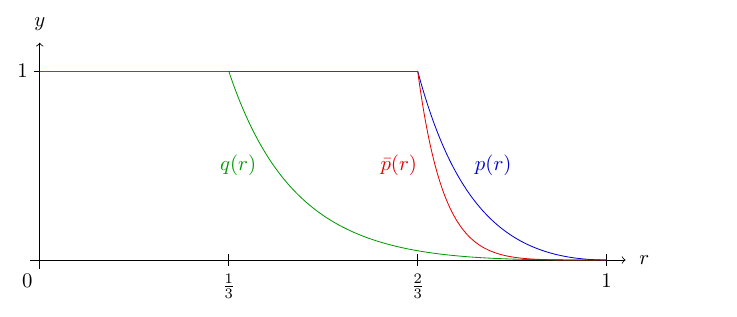}
		\caption{Winning probabilities for Breaker on a GW-tree arising from $\xi\sim\mathrm{Bin}(3,r)$, where
			node $v$ carries information $I(v)=|T_v|$. \label{Bin3}}
	\end{figure}
\end{example}

\begin{theorem}\label{tv=i}
 In the regime $I(v)=|T_v|$, the winning probability $p$ of Breaker is a solution to
 \begin{equation}\label{fp3}
 	x^2\,(1-q)=g\big(x\,(1-q)+q\big)-q.
 	\end{equation}
 To be more precise: $p=1$ in case $\mu\leq 2,\ p_2\neq1$ and $p=0$ if $\xi\geq2$ almost surely; for $\mu>2$ and $p_0+p_1>0$, $p$ equals the (unique) solution to \eqref{fp3} in $(0,1)$.
\end{theorem}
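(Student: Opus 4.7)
The plan is to mirror the argument for Theorem \ref{i=0}. By Lemma \ref{skewed}, in the non-trivial case $|T_\mathbf{0}|=\infty$ the game under $I(v)=|T_v|$ reduces to a hitting-time problem for the embedded random walk $(S_n)$ started at $S_0=1$ with i.i.d.\ increments $\xi_n'-2$; since $\xi'\geq 1$ the walk is left-continuous and $p$ is the probability that it ever reaches $0$.

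The first step is to express the pgf of $\xi'$ in terms of $g$. Using the explicit pmf from Lemma \ref{skewed}, swapping the order of summation and applying the binomial theorem gives
\begin{equation*}
    \IE x^{\xi'}=\frac{1}{1-q}\sum_{n\geq 1}p_n\Big[\big((1-q)x+q\big)^n-q^n\Big]=\frac{g\big((1-q)x+q\big)-q}{1-q},
\end{equation*}
where $g(q)=q$ is used. Conditioning on the first step of the walk (and invoking left-continuity so that reaching $0$ from state $k-1$ has probability $p^{k-1}$) leads, exactly as in Theorem \ref{i=0}, to $p^2=\IE p^{\xi'}$, which rearranges to \eqref{fp3}.

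The two trivial cases are quick. If $\xi\geq 2$ almost surely then $q=0$ and by Lemma \ref{skewed2}(b) $\xi'\stackrel{d}{=}\xi$, so the walk is non-decreasing from $S_0=1$ and $p=0$. If instead $1<\mu\leq 2$ with $p_2\neq 1$, then by Lemma \ref{skewed2}(a) the walk has non-positive drift $\mu-2$ and is non-degenerate, hence recurrent on $\IZ$, and thus $p=1$.

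The substantial case is $\mu>2$ with $p_0+p_1>0$. Here $\Prob(\xi'=1)=g'(q)$, which is strictly positive under these hypotheses (a short check: if $q>0$ then some $k\geq 1$ contributes to $g'(q)$ since $\mu>0$; if $q=0$ then $p_0=0$, forcing $p_1>0$, so $g'(0)=p_1>0$). Consequently Lemma \ref{drift} together with the positive drift $\mu-2$ yields $p<1$, while $p\geq g'(q)>0$ comes from a single down-step. For uniqueness in $(0,1)$ I would set $h(x):=g\big((1-q)x+q\big)-q-x^2(1-q)$, so that $h(0)=h(1)=0$, $h'(0)=(1-q)g'(q)>0$ and $h'(1)=(1-q)(\mu-2)>0$. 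Since $g$ has nonnegative power series coefficients, $g'''\geq 0$, and $h'$ is convex on $[0,1]$. The main obstacle is the uniqueness argument: a convex $h'$ strictly positive at both endpoints of $[0,1]$ has either no zeros (then $h$ is strictly increasing, contradicting $h(0)=h(1)$) or exactly two zeros $a<b$ with $h'<0$ on $(a,b)$; in the latter case $h$ strictly increases on $[0,a]$, strictly decreases on $[a,b]$, strictly increases on $[b,1]$, which forces $h(b)<0$ from $h(1)=0$, giving exactly one zero of $h$ in $(a,b)\subset(0,1)$. This interior zero must be $p$.
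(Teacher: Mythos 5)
Your proposal is correct and follows essentially the same route as the paper: reduce via Lemma \ref{skewed} to a hitting problem for the left-continuous walk with increments $\xi'-2$, obtain \eqref{fp3} by first-step analysis (the paper inlines the binomial-theorem computation that you package as the pgf identity $\IE x^{\xi'}=\frac{g((1-q)x+q)-q}{1-q}$), settle the degenerate cases by drift considerations, and get existence/uniqueness of the root in $(0,1)$ from Lemma \ref{drift} plus a convexity analysis of $h(x)=g\big((1-q)x+q\big)-q-x^2(1-q)$ with $h(0)=h(1)=0$, $h'(0)=(1-q)g'(q)>0$, $h'(1)=(1-q)(\mu-2)>0$. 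The only differences are cosmetic: the paper sends the subcase $p_0=0$ back to Theorem \ref{i=0} instead of treating it uniformly, and it phrases the zero-counting as ``the sign of $h'$ changes at most twice by convexity of $g'$'' rather than your explicit two-zero case analysis (where, for completeness, the degenerate possibility that $h'$ merely touches zero is excluded by the same contradiction with $h(0)=h(1)$).
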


\begin{proof} First note that both $x=0$ and $x=1$ (due to $q=g(q)$ and $g(1)=1$) are general solutions to equation \eqref{fp3}.
	In the trivial case $q=1$, the equation obviously holds and $p$ equals $1$. In case $q=0$ ($\Leftrightarrow p_0=0$) the statement becomes Theorem \ref{i=0}, which leaves case $0<q<1$ left to treat. Using Lemma \ref{skewed} and conditioning on the first step of the random walk $(S_n)_{n\in\IN_0}$, we get
	\begin{align*}
	p&=\sum_{k=1}^\infty \Prob(\xi'_1=k)\cdot p^{k-1}
	 =\frac{1}{1-q}\,\sum_{k=1}^\infty p^{k-1}\,\sum_{n\geq k} p_n \binom{n}{k}(1-q)^kq^{n-k}\\
	 &=\frac{1}{p\,(1-q)}\,\sum_{n=1}^\infty p_n\,\sum_{k=1}^n \binom{n}{k}p^k(1-q)^kq^{n-k}\\
	 &=\frac{1}{p\,(1-q)}\, \sum_{n=0}^\infty p_n \Big[\big(p(1-q)+q\big)^n-q^n\Big]\\
	 &=\frac{1}{p\,(1-q)}\,\Big(g\big(p(1-q)+q\big)-g(q)\Big),
	\end{align*}
where we used that the factor of $p_0$ is 0 in the before last line. In fact we tacitly assumed $p\neq 0$ as well, but as mentioned above, $x=0$ solves equation \eqref{fp3}.

To derive the second part of the claim, first note that by Lemma \ref{skewed2}, the random walk $(S_n)_{n\in\IN_0}$ with increments distributed like $\xi'-2$ has positive drift iff $\mu>2$. The case $\Prob(\xi\geq2)=1$ is included in $q=0$ (as mentioned at the start, where $\xi$ and $\xi'$ have the same distribution) and trivially gives $p=0$. In case $\mu\leq 2,\ p_2\neq1$, the random walk is neither constant nor having positive drift, hence $p=1$.
Finally, for $\mu>2$ and $p_0>0$ (if $p_0=0$ we are back to $q=0$) let us consider the function 
\[h(x):=g\big(x\,(1-q)+q\big)-(1-q)\,x^2-q.\]
In complete analogy to the reasoning in the proof of Theorem \ref{i=0}, the sign of its derivative $h'(x)=(1-q)\,\big(g'(x\,(1-q)+q)-2x\big)$ can change at most twice, due to the convexity of $g'$. Further, $\mu>2$ forces both $q<1$ and $p_0+p_1<1$, which implies that $g$ is even strictly convex on $(0,1)$.
Therefore we have not only $h(0)=h(1)=0$ and $h'(1)=(1-q)\,(\mu-2)>0$, but also $h'(0)=(1-q)\,g'(q)>(1-q)\,g'(0)=(1-q)\,p_1\geq0$, where the strict inequality comes from $q>0$ and strict convexity of $g$. Thus the function $h$ has exactly one zero in $(0,1)$. That $(S_n)_{n\in\IN_0}$ in this case is transient with positive drift (and therefore $0<p<1$, cf.\ Lemma \ref{drift}) concludes the proof.
\end{proof}

Note that for offspring distributions concentrated on $\IN$ (i.e.\ $p_0=0$, hence $q=0$), as already indicated above, the statement of Theorems \ref{tv=i} and \ref{i=0} coincide, since then almost surely $|T_v|=\infty$ for all $v$ which renders this piece of information irrelevant.

\begin{example}[Poisson distribution]
If we consider $\xi\sim\mathrm{Poi}(\lambda)$ in this regime, the game is still non-trivial for $\lambda>2$. Calculating $q$ according to Theorem \ref{extinction} and solving \eqref{fp3} numerically, we can determine $p$ and calculate $\bar{p}=g(p)$, cf.\ Theorem \ref{arch}. The resulting probabilities are depicted below.
\begin{figure}[H]
	\includegraphics[scale=1]{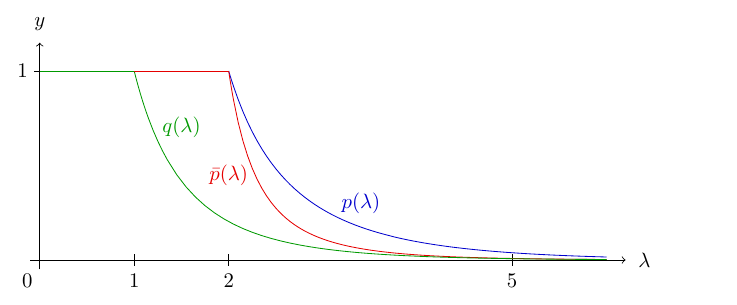}
	\caption{Winning probabilities for Breaker on a GW-tree arising from $\xi\sim\mathrm{Poi}(\lambda)$, where
		node $v$ carries information $I(v)=|T_v|$. \label{Poi2}}
\end{figure}
\end{example}

\section{Further observations and open problems}
As mentioned earlier, the phase transition (from a.s.\ win for Breaker to non-degenerate outcome of the game)
turned out to be discontinuous in the winning probabilities only in the case of complete information, i.e.\ $I(v)=T_v$.
Further, the three information regimes considered seem to suggest that an increase in the level of information
is (more) beneficial for Breaker (than for Maker), see Figure \ref{Poicomp} below illustrating a comparison of winning probabilities for $\xi\sim\mathrm{Poi}(\lambda)$ in the different regimes. This might not come as a surprise, owing to the fact that the underlying graph is a tree which attributes a more local effect to moves of Maker and a more global one to the moves of Breaker.

\begin{figure}[H]
	\includegraphics[scale=1]{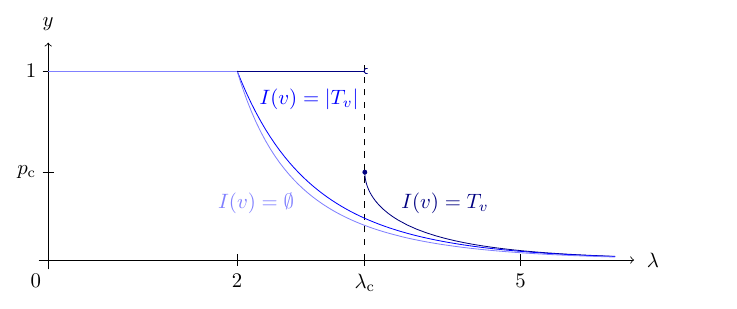}
	\caption{Winning probabilities for Breaker on a GW-tree arising from $\xi\sim\mathrm{Poi}(\lambda)$ in all three considered information regimes compared.\label{Poicomp}}
\end{figure}

Of course, there are many more regimes to be considered: When $v$ becomes an internal/external vertex, one could reveal the subtree $T_v$ not completely (as in Section \ref{sec:all}), but only down to a fixed level $k\in\IN$.
Alternatively the node $v$ could carry only the number of offspring in a certain generation of $T_v$ as information, such as the number of its children and/or grand children for instance (neither disclosing the tree-structure of the first few levels, nor whether or not $T_v$ is finite -- if the corresponding numbers are bigger than $0$).
Such regimes, however, appear to be more complex in their analysis and will be left for future work -- together with 
the question if there are information regimes other than $I(v)=T_v$, in which the phase transition in $p$ and $\bar{p}$ is a discontinuous one.

Another possible generalization would be to break the information symmetry in the sense that what is disclosed to Maker resp.\ Breaker when $v$ appears as internal/external node differs. This makes the progression of the game and potential strategies certainly more complex as the players in that case can extract further information and learn from the chosen moves of their opponent during the game.

It should be mentioned that in all three regimes considered here, information about the distribution of $\xi$ is not relevant for the players. In other regimes, for instance if $I(v)$ is the parity of $\xi_v$ and the offspring distribution of the type one-or-many (cf.\ Example \ref{oom}) with $n$ large and even, knowing the underlying distribution makes a big difference.

In fact, in regimes where $I(v)$ contains the number of children $\xi_v$, the outcome of the game is insensitive to changes of $(p_0,p_1)$ as long as the sum $p_0+p_1$ is kept unchanged: There is no point for Maker in fixating the edge connecting a node $v$ with $\xi_v=1$ to the root, as Breaker can immediately disconnect $T_v\setminus{v}$. In such a regime, changing the distribution of $\xi$ from one-or-many to ``none-or-many'', i.e.\ $p_n=1-p_0=r$, will change
the probability $q$ for the tree to be finite, but neither $p$ nor $\bar{p}$.

\vspace*{1em}
The question concerning possible cases in which $p=\bar{p}$, touched upon at the end of Section \ref{sec3}, can be settled in full generality without further ado:

\begin{proposition}
Irrespectively of offspring distribution and information regime, it holds 
\[p=\bar{p}\quad\Longleftrightarrow\quad p\in\{0,1\}.\]
\end{proposition}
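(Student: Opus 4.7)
The direction $(\Leftarrow)$ is immediate from facts already in place. If $p=0$, then $p\geq\bar p$ forces $\bar p=0=p$. If $p=1$, Theorem~\ref{arch} yields $\bar p\geq g(p)=g(1)=1$, hence $\bar p=1=p$.

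For the non-trivial direction $(\Rightarrow)$, my plan is to rule out $p=\bar p\in(0,1)$ by a ``pass'' / strategy-stealing argument combined with graph-monotonicity of Maker-Breaker. In the Breaker-starts game on $T$, Breaker's first move necessarily removes some edge incident to the root, say $\langle\mathbf{0},v\rangle$, disconnecting the subtree $T_v$; the continuation is a Maker-starts game on $T\setminus T_v$ with Breaker-winning probability $\bar p_{T\setminus T_v}$, which by graph-monotonicity is at least $\bar p_T=\bar p$. Optimality of Breaker's first choice gives $p\geq\bar p$, with equality precisely when removing any root-subtree is ``game-irrelevant'' for the Maker-starts game on $T$. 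Iterating this observation, one would peel off subtrees of the root one by one without affecting the Maker-starts Breaker-winning probability, ultimately reducing Maker's situation to the trivial tree consisting of $\mathbf{0}$ alone, on which Breaker wins with probability $1$ - contradicting $\bar p<1$.

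The main obstacle is to make this iteration fully rigorous in a regime-independent way, since after each ``free'' removal the pass principle must be re-applied on the reduced tree, and the equality $p=\bar p$ does not a priori propagate to subgames. In the three information regimes explicitly treated in the paper this can be bypassed by verifying the claim directly from the regime-specific formulas: for $I(v)=T_v$, Theorem~\ref{formula} gives $p-\bar p=(1-p)\,g'(p)$, which on $(0,1)$ vanishes only in the degenerate case $g'(p)=0$ (forcing $p_k=0$ for all $k\geq 1$); for $I(v)=|T_v|$, Theorem~\ref{arch} combined with Theorem~\ref{tv=i} forces $p$ to be simultaneously a fixed point of $g$ and the unique solution of \eqref{fp3} in $(0,1)$, so $p\in\{q,1\}$, and a drift argument on the embedded walk of Lemma~\ref{skewed} rules out $p=q$ whenever $\mu>2$; and for $I(v)=\emptyset$ Theorem~\ref{i=0} immediately reduces $p=\bar p$ to $p=p^2$ when $p_0=0$, while for $p_0>0$ the explicit expression $\bar p=p_0+(p/y)\bigl(g(y)-p_0\bigr)$ with $y=\alpha+(1-\alpha)p$ derived in the proof of Proposition~\ref{ineq}, used together with the game equation characterising $p$, handles the last remaining case.
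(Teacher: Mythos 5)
Your direction $(\Leftarrow)$ is correct and agrees with the paper. The direction $(\Rightarrow)$, however, has genuine gaps, and they are not just matters of polish. Your primary ``peeling'' argument cannot work as stated: $p=\bar p$ is a statement about averaged win probabilities, so it gives no control over individual subgames, the notion of a removal being ``game-irrelevant'' is never defined, and on an infinite tree the proposed iteration neither terminates nor produces the claimed reduction to the trivial tree --- you acknowledge this yourself, which makes the fallback your actual proof. But the fallback proves a weaker statement than the proposition: the claim is ``irrespectively of \ldots information regime'', i.e.\ for \emph{every} regime, while you verify it only for the three regimes the paper treats explicitly. Moreover, even within those, two of your four cases are assertions rather than proofs. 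In the regime $I(v)=|T_v|$, positive drift of the walk from Lemma~\ref{skewed} only yields $p<1$ (Lemma~\ref{drift}); it says nothing about whether the unique root of \eqref{fp3} in $(0,1)$ can coincide with the fixed point $q$ of $g$, which is what you must exclude. (One can exclude it, e.g.\ by noting that Breaker, when starting, also wins surely on the positive-probability event that the tree is infinite but exactly one child of the root has infinite progeny, whence $p\geq q+(1-q)\,g'(q)>q$; but that is a game argument, not a drift argument.) In the regime $I(v)=\emptyset$ with $p_0>0$ there is no analogue of \eqref{fp1}, because the embedded walk is no longer left-continuous, so ``the game equation characterising $p$'' that you invoke does not exist in the paper; you would first have to derive it via the depth-first decomposition of Proposition~\ref{ineq}, something like $p=p_0+p_1+(p/y^2)\bigl(g(y)-p_0-p_1y\bigr)$ with $y=\alpha+(1-\alpha)p$, and then verify $p-\bar p=p_1(1-p/y)+(p/y^2)\bigl(g(y)-p_0\bigr)(1-y)>0$; none of this is in your text. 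The two cases you do settle ($I(v)=T_v$ via $p-\bar p=(1-p)g'(p)$, and $I(v)=\emptyset$ with $p_0=0$ via $p=p^2$) are fine.

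The idea you are missing is the paper's short, regime-independent argument. If $p\notin\{0,1\}$, then $p_0<1$ and $p_0+p_1>0$, so the event $A$ that the root has at least one child but at most one child with offspring has positive probability. On $A$, Breaker wins with certainty when starting (cut the unique possibly fertile child), whereas if Maker starts, Breaker's conditional win probability is at most $p$; off $A$, one uses the conditional form of the monotonicity bound $\bar p\leq p$. Hence
$\bar p\leq \Prob(\text{B wins}\mid\text{B starts},A^{\mathrm{c}})\,\Prob(A^{\mathrm{c}})+p\,\Prob(A)<p$,
because $p<1$ and $\Prob(A)>0$. This single computation settles every offspring distribution and every information regime at once, which is exactly the generality the proposition asserts; if you want to rescue your write-up, replace both the peeling argument and the regime-by-regime fallback with it.
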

\begin{proof}
Since $0\leq g(p)\leq \bar{p}\leq p\leq 1$ (cf.\ Theorem \ref{arch}), the easy direction ``$\Leftarrow$'' follows directly from $g(1)=1$. As regards ``$\Rightarrow$'', $p\notin\{0,1\}$ implies $p_0<1$ and $p_0+p_1>0$ (otherwise either $p=1$ or $\xi\geq2$ a.s.\ forcing $p=0$). It is not difficult to convince yourself that in both cases, $0<p_0<1$ and $p_1>0$ respectively, the event that $\xi_\mathbf{0}>0$ and at most one individual $v$ in the first generation of the tree has offspring, i.e. \[A:=\{Z_1>0,\ Z_2=\xi_v\text{ for some $v$ in generation 1}\},\]
has positive probability. On such a tree, Breaker wins with probability 1 if starting (disconnecting $v$) and probability $p$ if Maker starts (fixating the edge to $v$). Therefore, it follows
\begin{align*}\bar{p}&=\Prob(\text{B wins}\,|\,\text{M starts}, A^\text{c})\cdot\Prob(A^\text{c})+ \Prob(\text{B wins}\,|\,\text{M starts}, A)\cdot\Prob(A)\\
	&\leq \Prob(\text{B wins}\,|\,\text{B starts}, A^\text{c})\cdot\Prob(A^\text{c}) + p\cdot\Prob(A)\\
	&<\Prob(\text{B wins}\,|\,\text{B starts}, A^\text{c})\cdot\Prob(A^\text{c}) + \Prob(\text{B wins}\,|\,\text{B starts}, A)\cdot\Prob(A)\\
	&=p,
\end{align*}
which concludes the proof.
\end{proof}

Last but not least, in cases when Breaker wins, the question concerning the number of rounds necessary to exhaust the tree becomes interesting. Given optimal play, it directly relates to the size $|C(\mathbf{0})|$ of the component containing the root that Maker is able to fixate before there are no more edges available for play. In information regimes like $I(v)=\emptyset$ and $I(v)=|T_v|$, where the number of available visible edges can be regarded as a random walk, the time the game lasts (counted in rounds) is given by $\tau_{\leq0}$ the time until this walk exits $\IN$.



\vspace{0.5cm}\noindent
	{\sc \small 
	Timo Vilkas\\
	Statistiska institutionen,\\
	Ekonomihögskolan vid Lunds universitet,\\
	220\,07 Lund, Sweden.}\\
	timo.vilkas@stat.lu.se\\

\end{document}